\numberwithin{equation}{section}
\newcounter{marnote}
\def \dis {\displaystyle}
\def \into {\int_\Omega}
\def \confai {-\kern -.5em\rightharpoonup}
\def \ep {\varepsilon}
\def \Om {\Omega}
\def \NN {\mathbb N}
\def \RR {\mathbb R}
\def\E{\mathcal{E}}
\def\F{\mathcal{F}}
\def \M {\mathscr{M}}
\def \R {\mathscr{R}}
\def \beq {\begin{equation}}
\def \eeq {\end{equation}}
\def \ba {\begin{array}}
\def \ea {\end{array}}
\def \ecart {\noalign{\medskip}}
\newtheorem{Thm}{Theorem}[section]
\newtheorem{Cor}[Thm]{Corollary}
\newtheorem{Pro}[Thm]{Proposition}
\newtheorem{Lem}[Thm]{Lemma}
\newtheorem{Adef}[Thm]{Definition}
\newenvironment{Def}{\begin{Adef}}{\end{Adef}}
\newtheorem{Arem}[Thm]{Remark}
\newenvironment{Rem}{\begin{Arem}\rm}{\end{Arem}}
\newtheorem{Aexa}[Thm]{Example}
\newenvironment{Exa}{\begin{Aexa}\rm}{\end{Aexa}}
\newtheorem{Anot}[Thm]{Notation}
\def \reff #1.{figure~\ref{#1}}
\def \refs #1.{Section~\ref{#1}}
\def \refss #1.{Subsection~\ref{#1}}
\def \refD #1.{Definition~\ref{#1}}
\def \refT #1.{Theorem~\ref{#1}}
\def \refL #1.{Lemma~\ref{#1}}
\def \refC #1.{Corollary~\ref{#1}}
\def \refP #1.{Proposition~\ref{#1}}
\def \refPt #1.{Properties~\ref{#1}}
\def \refR #1.{Remark~\ref{#1}}
\def \refE #1.{Example~\ref{#1}}
\def \refN #1.{Notation~\ref{#1}}
\title{{\bf Optimal sources for elliptic PDEs}}
\begin{document}
\maketitle
\centerline{\large G. Buttazzo$^\dag$,\hskip 0.3cm J. Casado-D\'{\i}az$^{\dag\dag}$,\hskip 0.3cm F. Maestre$^{\dag\dag}$}
\bigskip 
\centerline{{$^\dag$} Dipartimento di Matematica, Universit\`a di Pisa,}
\centerline{Largo B. Pontecorvo, 5}
\centerline{56127 Pisa, ITALY}
\bigskip
\centerline{$^{\dag\dag}$ Dpto. de Ecuaciones Diferenciales y An\'alisis Num\'erico,}
\centerline{Facultad de Matem\'aticas, C. Tarfia s/n}
\centerline{41012 Sevilla, SPAIN}
\bigskip
\centerline{e-mail: giuseppe.buttazzo@unipi.it, jcasadod@us.es,
fmaestre@us.es}

\begin{abstract}
We investigate optimal control problems governed by the elliptic partial differential equation $-\Delta u=f$ subject to Dirichlet boundary conditions on a given domain $\Om$. The control variable in this setting is the right-hand side $f$, and the objective is to minimize a cost functional that depends simultaneously on the control $f$ and on the associated state function $u$.

We establish the existence of optimal controls and analyze their qualitative properties by deriving necessary conditions for optimality. In particular, when pointwise constraints of the form $\alpha\le f\le\beta$ are imposed a priori on the control, we examine situations where a {\it bang-bang} phenomenon arises, that is where the optimal control $f$ assumes only the extremal values $\alpha$ and $\beta$. More precisely, the control takes the form $f=\alpha1_E+\beta1_{\Om\setminus E}$, thereby placing the problem within the framework of shape optimization. Under suitable assumptions, we further establish certain regularity properties for the optimal sets $E$.

Finally, in the last part of the paper, we present numerical simulations that illustrate our theoretical findings through a selection of representative examples.
\end{abstract}

\textbf{Keywords: }shape optimization, optimal potentials, regularity, bang-bang property, optimal control problems.

\medskip

\textbf{2020 Mathematics Subject Classification: }49Q10, 49J45, 35B65, 35R05, 49K20.

\section{Introduction}\label{intro}

In this paper, we study an optimal control problem for a partial differential equation governed by the Laplace operator in a given bounded domain $\Om$ of $\RR^d$, with homogeneous Dirichlet boundary conditions on $\partial\Om$. The control variable is the right-hand side $f$, which is required to lie within a suitably chosen admissible class $\F$. The associated state equation reads
\beq\label{rhs}
\begin{cases}
-\Delta u=f&\hbox{in }\Om\\
u\in H^1_0(\Om).
\end{cases}\eeq
and we denote by $u_f$ the unique weak solution corresponding to a given control $f$.

The cost functional to be minimized is of the form
\beq\label{J}
J(f)=\int_\Om j(x,u_f,f)\,dx,
\eeq
where $j$ is a prescribed integrand satisfying appropriate conditions. The optimal control problem can thus be formulated as
$$\min\big\{J(f)\ :\ f\in\F\big\}.$$
We focus on the case where the admissible class $\F$ is defined via an integral constraint of the type
$$\F=\bigg\{\int_\Om\psi(f)\,dx\le m\bigg\},$$
for some given $m>0$ and a convex lower semicontinuous function $\psi:\RR\to [0,\infty]$ satisfying the following hypotheses:
$$\begin{cases}
{\rm int}(D(\psi))\not=\emptyset\ \hbox{ with }D(\psi)=\big\{s\in\RR:\ \psi(s)<\infty\}\\
\dis\lim_{|s|\to+\infty}\psi(s)=+\infty.
\end{cases}$$
Under these assumptions, the optimization problem we deal with takes the form \beq\label{optpb}
\min\bigg\{\int_\Om j(x,u_f,f)\,dx\ :\ \int_\Om\psi(f)\,dx\le m\bigg\},
\eeq

A particularly interesting case arises when the control $f$ is constrained to lie between two prescribed constants $\alpha$ and $\beta$. This constraint can be expressed by taking
$$\psi(s)=+\infty\qquad\text{if }s\notin[\alpha,\beta].$$

Under this setting, and for suitable choices of the integrand $j$ in the cost functional, a {\it bang-bang} phenomenon may occur, meaning that the optimal control $f$ attains only the extreme values $\alpha$ and $\beta$. More precisely, the optimal control takes the form
$$f=\beta1_E+\alpha1_{\Om\setminus E}$$
for some measurable subset $E\subset\Om$. In this regime, the problem naturally transforms into a {\it shape optimization problem}, where the control variable is the set $E$ itself. We devote particular attention to this case, discussing several related aspects, including the regularity properties of the optimal sources $f$ and the structural features of the associated optimal sets $E$.

Finally, in Section~\ref{snum}, we present a series of numerical simulations that illustrate the theoretical phenomena described and provide concrete examples of the optimal configurations.

\section{Notation}\label{notation}

In this section, for the convenience of the reader, we introduce and summarize the main notation that will be consistently used throughout the paper.

\begin{itemize}
\item We denote by $\Om$ a bounded domain in $\RR^d$.

\item Let $\psi:\RR\to (-\infty,\infty]$ be a convex lower semicontinuous function. We introduce the following related notions:
\begin{itemize}
\item The domain of $\psi$, denoted by $D(\psi)$, is defined by
$$D(\psi)=\big\{s\in\RR:\ \psi(s)<\infty\big\}.$$
\item The conjugate function $\psi^*:\RR\to(-\infty,\infty]$ is given by
$$\psi^*(t):=\sup_{s\in D(\psi)}\big(ts-\psi(s))$$
\item The subdifferential of $\psi$ at a point $s\in D(\psi)$, denoted by $\partial\psi(s)$, is defined as
$$\partial\psi(s)=\big\{\xi\in\RR,\quad\psi(r)\ge\psi(s)+\xi(r-s),\ \forall\,r\in\RR\big\}=\big[d_-\psi(s),d_+\psi(s)\big],$$
where
$$d_-\psi(s)=\lim_{r\nearrow s}{\psi(r)-\psi(s)\over r-s},\qquad d_-\psi(s)=\lim_{r\searrow s}{\psi(r)-\psi(s)\over r-s}$$
denote, respectively, the left and right derivatives of $\psi$ at $s$.
\item The recession limits of $\psi$, denoted by $c^-(\psi)$ and $c^+(\psi)$, are defined by
$$c^-(\psi)=\lim_{s\to-\infty}\frac{\psi(s)}{s}\qquad c^+(\psi)=\lim_{s\to+\infty}\frac{\psi(s)}{s}\;.$$
\end{itemize}

\item For a bounded open set $\Om\subset\RR^d$, we denote by ${\cal M}(\Om)$ the space of bounded Borel measures on $\Om$.

\item Given $f\in{\cal M}(\Om)$, we denote by $f^a$ and $f^s$ the absolutely continuous and singular parts of $f$ in its Radon-Nikodym decomposition:
$$f=f^adx+f^s.$$
The positive and negative parts of a measure $f$ are denoted by $f_-$ and $f_+$ respectively. The support of a measure $f$ is denoted by ${\rm supp}(f)$.

\item For $s,t\in\RR$, we denote by $t\wedge s$ and $t\vee t$ the minimum and maximum of $s$ and $t$, respectively.

\item For any $m>0$, we define the truncation function $T_m:\RR\to [-m,m]$ at height $m$, by
$$T_m(s)=(m\wedge s)\vee(-m),\quad\forall\,s\in\RR.$$
\end{itemize}

\section{Existence of an optimal source}\label{sexi}

In this section, we establish the existence of an optimal source term $f$ under suitable mild assumptions. We begin by considering the case where the function $\psi$ is convex and exhibits superlinear growth at infinity, that is
\beq\label{super}
\lim_{|s|\to+\infty}\frac{\psi(s)}{|s|}=+\infty.
\eeq

\begin{Thm}\label{ex1}
Suppose that the functional \eqref{J} is lower semicontinuous with respect to the weak $L^1(\Om)$ topology, and that the integrand $j(x,s,z)$ satisfies the growth condition \beq\label{pgrj}
-c|s|^p-a(x)\le j(x,s,z),\qquad\text{with }c>0,\ a\in L^1(\Om),\ p<d/(d-2).
\eeq
If, in addition, the function $\psi$ satisfies the superlinear growth condition \eqref{super}, then the optimization problem \eqref{optpb} admits at least one solution $f_{opt}\in L^1(\Om)$.
\end{Thm}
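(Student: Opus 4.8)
The plan is to apply the direct method of the calculus of variations, so the argument splits into an \emph{a priori} bound, a well-posedness check, a compactness extraction, and the lower semicontinuity step. First I would exploit the constraint. Since \eqref{super} forces $\psi(s)\to+\infty$ as $|s|\to\infty$ and $\psi$ is convex and lower semicontinuous, $\psi$ attains a finite minimum and is bounded below, say $\psi\ge -L$. Given $M>0$, superlinearity provides $R>0$ with $\psi(s)\ge M|s|$ whenever $|s|\ge R$, so for any admissible $f$
$$M\int_{\{|f|\ge R\}}|f|\,dx\le\int_{\{|f|\ge R\}}\psi(f)\,dx\le m+L|\Om|.$$
This yields a uniform $L^1$ bound on the admissible class (taking e.g. $M=1$) together with uniform integrability (letting $M\to\infty$), i.e. the de la Vallée Poussin criterion is met.

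Second, I would check that the infimum is finite. The upper bound is immediate by testing with any constant source valued in ${\rm int}(D(\psi))$, which is admissible for suitable normalization. For the lower bound I invoke the elliptic estimate for $L^1$ (equivalently, measure) data: $u_f\in W^{1,q}_0(\Om)$ for every $q<d/(d-1)$, with $\|u_f\|_{W^{1,q}_0}\le C\|f\|_{L^1}$; by Sobolev embedding this gives $\|u_f\|_{L^p}\le C\|f\|_{L^1}$ for every $p<d/(d-2)$, the exponent range tending to $d/(d-2)$ as $q\nearrow d/(d-1)$. Combining with the growth hypothesis \eqref{pgrj} and the $L^1$ bound from the first step,
$$J(f)\ge -c\|u_f\|_{L^p}^p-\|a\|_{L^1}\ge -c\,C^p\,m^p-\|a\|_{L^1},$$
uniformly over the admissible class, so the infimum is a finite real number.

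Third, I would take a minimizing sequence $(f_n)$. By the uniform integrability established above and the Dunford–Pettis theorem, a subsequence (not relabeled) converges weakly in $L^1(\Om)$, $f_n\rightharpoonup f$. Since $\psi$ is convex and lower semicontinuous, the functional $g\mapsto\int_\Om\psi(g)\,dx$ is sequentially weakly lower semicontinuous on $L^1(\Om)$, hence $\int_\Om\psi(f)\,dx\le\liminf_n\int_\Om\psi(f_n)\,dx\le m$ and $f$ is admissible. The assumed lower semicontinuity of $J$ with respect to the weak $L^1(\Om)$ topology then gives $J(f)\le\liminf_n J(f_n)=\inf$, so $f=f_{opt}$ realizes the minimum and lies in $L^1(\Om)$.

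I expect the compactness step to be the main obstacle: a mere $L^1$ bound does not give weak sequential compactness, and it is precisely the superlinear growth \eqref{super} that upgrades the constraint to uniform integrability and unlocks Dunford–Pettis. A secondary but essential technical point is the low regularity of states generated by only $L^1$ sources; one must stay in the range $p<d/(d-2)$, which is exactly the threshold appearing in \eqref{pgrj}, so that the growth lower bound on $j$ remains integrable along the admissible class and the weak lower semicontinuity hypothesis can be brought to bear.
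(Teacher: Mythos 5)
Your proposal is correct and follows essentially the same route as the paper: the direct method, with superlinearity of $\psi$ upgrading the constraint to uniform integrability (de la Vall\'ee Poussin/Dunford--Pettis) for weak $L^1$ compactness of a minimizing sequence, followed by weak lower semicontinuity of both $f\mapsto\int_\Om\psi(f)\,dx$ and $J$. You merely make explicit two points the paper leaves implicit (the finiteness of the infimum via the $W^{1,q}_0$ estimates for $L^1$ data, $q<d/(d-1)$), while the paper instead spells out the strong $L^p(\Om)$ convergence of the states $u_n\to u_f$ via the compact embedding $L^1(\Om)\hookrightarrow W^{-1,q}(\Om)$ --- the ingredient by which the lower semicontinuity hypothesis on $J$ is meant to be verified in practice.
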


\begin{proof}
Assuming that $\psi$ grows superlinearly, any minimizing sequence $(f_n)$ for the optimization problem \eqref{optpb} is relatively compact in the weak topology of $L^1(\Om)$. Thus, up to a subsequence, we may suppose that $f_n\to f$ weakly in $L^1(\Om)$ for some $f\in L^1(\Om)$.

Moreover, due to the compact embedding of $L^1(\Om)$ into $W^{-1,q}(\Om)$ for every $q<d/(d-1)$, the corresponding solutions $u_n$ to the PDEs \eqref{rhs} converge strongly in $W^{1,q}_0(\Om)$, and hence strongly in $L^p(\Om)$ for all $p<d/(d-2)$, to the solution $u$ associated with the limit $f$.

Finally, by the lower semicontinuity of the mappings
$$f\mapsto J(f),\qquad\text{and}\qquad f\mapsto\int_\Om\psi(f)\,dx,$$
with respect to the weak $L^1(\Om)$ topology, it follows that $f$ indeed minimizes the original functional. Consequently, $f$ is an optimal solution.
\end{proof}

\begin{Rem} A sufficient condition ensuring the weak $L^1(\Om)$ lower semicontinuity of the functional $J$ defined in \eqref{J} is that the integrand $j(x,\cdot,\cdot)$ is lower semicontinuous in its arguments for almost every $x$, and that $j(x,s,\cdot)$ is convex for almost every $x$ and every $s$. For further details, we refer the reader to \cite{B}.
\end{Rem}

\begin{Rem}\label{rqgr}
If we strengthen the growth assumption on $\psi$ by requiring that there exists $q>1$ such that
\beq\label{concqpsi}
c|s|^q-a\le\psi(s)\qquad\text{for some }c>0,\ a\in\RR,
\eeq
then the growth condition \eqref{pgrj} on the integrand $j$ can be accordingly relaxed and allows for broader classes of nonlinearities and source terms, adapting to the growth properties of $\psi$. Specifically, we may assume:
$$\begin{cases}
\dis-c|s|^p-a(x)\le j(x,s,z),\quad\text{with }c>0,\ a\in L^1(\Om),\ p<{dq\over d-2q}&\text{if }q<d/2\\
\dis-ce^{|s|^p}-a(x)\le j(x,s,z),\quad\text{with }c>0,\ a\in L^1(\Om),\ \ p<{d\over d-1}&\text{if }q=d/2\\
-a_n(x)\le j(x,s,z) \ \hbox{ for }|s|<n,\quad\text{with }a_n\in L^1(\Om),\ \forall\,n\in\NN&\text{if }q>d/2.
\end{cases}$$
\end{Rem}

We now turn our attention to the case when the function $\psi$ exhibits a linear growth, that is,
\beq\label{linear}
c|s|-a\le\psi(s)\qquad\text{for some constants }c>0,\ a\in\RR.
\eeq
In this setting, the optimal source term may no longer belong to $L^1(\Om)$, but may instead be represented by a finite Radon measure. Accordingly, the integral $\int_\Om\psi(f)$ must be interpreted in the sense of measures, namely:
\beq\label{psim}
\int_\Om\psi(f)=\int_\Om\psi\big(f^a(x)\big)\,dx+c^+(\psi)\int df_+^s-c^-(\psi)\int df_-^s.
\eeq
It is a classical result that functionals of the form \eqref{psim} are lower semicontinuous with respect to the weak* convergence of measures.

\begin{Thm}\label{ex2}
Suppose that the functional \eqref{J} is weakly* lower semicontinuous in the space $\M(\Om)$ of finite Radon measures, and that the integrand $j$ satisfies the growth condition
$$-c|s|^p-a(x)\le j(x,s,z),\qquad\text{for some }c>0,\ a\in L^1(\Om),\ p<d/(d-2).$$
If, in addition, the function $\psi$ satisfies the linear growth condition \eqref{linear}, then the optimization problem \eqref{optpb} admits at least one optimal solution $f_{opt}$, which is a measure with finite total variation.
\end{Thm}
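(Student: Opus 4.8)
The plan is to run the direct method of the calculus of variations in the space $\M(\Om)$, in close analogy with the proof of Theorem~\ref{ex1}, but replacing the weak $L^1(\Om)$ topology by the weak* topology of measures. First I would fix a minimizing sequence $(f_n)\subset\M(\Om)$ for problem \eqref{optpb}, each $f_n$ being admissible, so that $\int_\Om\psi(f_n)\le m$ with the left-hand side understood in the sense of \eqref{psim}. (We take for granted, as is implicit, that the admissible class is nonempty; this is automatic whenever $\psi$ attains small enough values, as in the bang-bang case where $\psi\equiv 0$ on $[\alpha,\beta]$.)

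The crucial step is to extract from the constraint a uniform bound on the total variations $\|f_n\|_{\M(\Om)}$, and this is exactly where the linear growth hypothesis \eqref{linear} enters, through the recession limits. From $\psi(s)\ge c|s|-a$ one reads off, upon dividing by $s$ and letting $s\to\pm\infty$, that $c^+(\psi)\ge c$ and $-c^-(\psi)\ge c$. Substituting these three lower bounds into the three terms of the decomposition \eqref{psim} gives
$$\int_\Om\psi(f)\ \ge\ c\Big(\int_\Om|f^a|\,dx+\int df_+^s+\int df_-^s\Big)-a|\Om|\ =\ c\,\|f\|_{\M(\Om)}-a|\Om|,$$
so that $\int_\Om\psi(f_n)\le m$ forces $\|f_n\|_{\M(\Om)}\le (m+a|\Om|)/c$. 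Since $\M(\Om)$ is the dual of $C_0(\Om)$, the Banach--Alaoglu theorem then yields a subsequence (not relabelled) with $f_n\rightharpoonup^* f$ weakly* in $\M(\Om)$ for some finite measure $f$.

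It remains to pass to the limit in the constraint and in the objective. For the constraint I would invoke the classical weak* lower semicontinuity of functionals of the form \eqref{psim}, recalled in the text just before the statement, which gives $\int_\Om\psi(f)\le\liminf_n\int_\Om\psi(f_n)\le m$; hence the limit $f$ is admissible. For the objective I would simply use the standing hypothesis that $J$ is weak* lower semicontinuous on $\M(\Om)$, so that $J(f)\le\liminf_n J(f_n)=\inf\eqref{optpb}$, which identifies $f$ as a minimizer. I would also note that the growth condition \eqref{pgrj} on $j$, combined with the regularity theory for the Dirichlet problem \eqref{rhs} with measure data (for which $u_f\in W^{1,q}_0(\Om)$ for every $q<d/(d-1)$, hence $u_f\in L^p(\Om)$ for every $p<d/(d-2)$ by Sobolev embedding), guarantees $J(f)>-\infty$ and that the infimum is finite on the admissible class, so the scheme is not vacuous.

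The main obstacle, relative to the superlinear case, is that weak* convergence permits \emph{concentration} of mass, so the limit is genuinely a measure and its singular part must be accounted for correctly; this is precisely the reason the constraint functional is defined through the recession constants in \eqref{psim}. The only nonroutine input is the weak* lower semicontinuity of that functional, which I take as the cited classical fact, while the remaining ingredients are the elementary recession estimate displayed above, weak* compactness, and the standard measure-data elliptic regularity.
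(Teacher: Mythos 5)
Your proposal is correct and follows essentially the same route as the paper: the direct method in $\M(\Om)$, with the linear growth of $\psi$ yielding a total variation bound, Banach--Alaoglu providing weak* compactness, and the assumed weak* lower semicontinuity of $J$ together with the classical lower semicontinuity of the functional \eqref{psim} closing the argument. Your explicit derivation of the coercivity estimate $\int_\Om\psi(f)\ge c\|f\|_{\M(\Om)}-a|\Om|$ from the recession limits is a welcome addition of detail that the paper leaves implicit.
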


\begin{proof}
The proof proceeds along similar lines as that of Theorem \ref{ex1}. Let $(f_n)$ be a minimizing sequence for the optimization problem \eqref{optpb}. Since $(f_n)$ is bounded in the space of finite Radon measures, by the Banach-Alaoglu theorem, we can extract a subsequence (still denoted by $(f_n)$) which converges to some measure $f$ in the weak* topology of $\M(\Om)$.

The corresponding sequence of solutions $(u_n)$ to the PDEs \eqref{rhs} then converges strongly in $W^{1,q}_0(\Om)$ for every $q<d/(d-1)$, and therefore also strongly in $L^p(\Om)$ for every $p<d/(d-2)$, to the solution $u$ associated with the limit measure $f$.

Finally, the weak* lower semicontinuity of both terms involved in the optimization problem \eqref{optpb},
$$f\mapsto J(f),\qquad\text{and}\qquad f\mapsto\int_\Om\psi(f),$$
ensures that $f$ is indeed an optimal solution to \eqref{optpb}.
\end{proof}

\begin{Rem}
A sufficient condition for the lower semicontinuity of the functional $J$ in \eqref{J} with respect to the weak* convergence of measures is the following (see for example \cite{BB}). Suppose the integrand $j(x,s,z)$ admits the decomposition in the form
$$j(x,s,z)=A(x,s)+B(x,z),$$
where the functions $A$ and $B$ satisfy the following properties:
\begin{itemize}
\item[-]for almost every $x\in\Om$ the function $A(x,\cdot)$ is lower semicontinuous;
\item[-]there exist constants $c>0$, $p<d/(d-2)$ and a function $a\in L^1(\Om)$ such that
$$A(x,s)\ge-c|s|^p+a(x);$$
\item[-]for almost every $x\in\Om$ the function $B(x,\cdot)$ is convex and lower semicontinuous;
\item[-]the associated recession function
$$B^\infty(x,z)=\lim_{t\to+\infty}\frac{B(x,tz)}{t}$$
is lower semicontinuous with respect to both variables $(x,z)$;
\item[-]there exist functions $a_0\in C_0(\Om)$ and $a_1\in L^1(\Om)$ such that
$$B(x,z)\ge a_0(x)z+a_1(x).$$
\end{itemize}
The assumptions on the function $A$ allow to obtain the lower semicontinuity thanks to the Fatou's lemma, while the assumptions on the function $B$ allow to obtain the lower semicontinuity thanks to the results on functionals defined on measures. For all the details we refer to \cite{BB}, where more general cases, including the ones where the functional $J$ is not convex, are considered.
\end{Rem}

\section{Necessary conditions of optimality}\label{snec}

In this section, we derive some necessary conditions of optimality that any solution $f_{opt}$ must satisfy. These conditions are presented in Theorem \ref{Thcop} below. To this end, it is convenient to introduce the resolvent operator $\R$, which associates to every function $f$ the unique solution $u$ of the partial differential equation \eqref{rhs}. It is well known that $\R$ is a self-adjoint operator.

\begin{Thm}\label{Thcop}
Suppose that the function $j$ appearing in the formulation of the optimal control problem \eqref{optpb} satisfies the growth condition
$$|j(x,s,z)|\le a(x)+c|s|^p,\qquad\hbox{with }c>0,\ a\in L^1(\Om),\ p<d/(d-2).$$
In addition, we assume that one of the following conditions holds.
\begin{itemize}
\item (Case of superlinear growth): If $\psi$ satisfies the superlinear growth condition \eqref{super}, then for almost every $x\in\Om$ and every $(s,z)\in\RR^2$, the partial derivatives $\partial_sj(x,s,z)$ and $\partial_zj(x,s,z)$ exist and fulfill
\beq\label{acdjcs}
\begin{cases}
|\partial_s j(x,s,z)|\le b(x)+\gamma\big(|s|^\sigma+|z|^\tau\big)\\
|\partial_z j(x,s,z)|\le\gamma,
\end{cases}\eeq
where $\gamma>0$, $b\in L^q(\Om)$ with $q>d/2$, $\sigma<2/(d-2)$, and $\tau< 2/d$.
\item (Case of linear growth): If $\psi$ exhibits a linear growth, meaning $c^+(\psi)-c^-(\psi)>0$, then $j=j(x,s,z)$ depends only on $(x,s)$ and not on $z$. In this case, for almost every $x\in\Om$ and every $s\in\RR$, the partial derivative $\partial_sj(x,s)$ exists and satisfies
\beq\label{acdjcl}
|\partial_s j(x,s)|\le b(x)+\gamma|s|^\sigma,
\eeq
where again $\gamma>0$, $b\in L^q(\Om)$ with $q>d/2$, $\sigma<2/(d-2)$.
\end{itemize}
Then, if $f_{opt}$ is an optimal solution to the problem \eqref{optpb}, there exists a non-negative scalar $\lambda\ge0$ such that
\beq\label{condpl}
\lambda\left(\into\psi(f_{opt})dx-m\right)=0,
\eeq
and, setting
\beq\label{defw}
w:=\R\big(\partial_sj(x,\R(f_{opt}),f_{opt})\big)+\partial_zj(x,\R(f_{opt}),f_{opt}),
\eeq
the following alternative holds:
\begin{itemize}
\item If $\lambda=0$, then
\beq\label{condoptl0} 
\begin{cases}
w\ge0\hbox{ a.e. in }\Om\hbox{ if }\sup\big(D(\psi)\big)=+\infty\\
w\le0\hbox{ a.e. in }\Om\hbox{ if }\inf\big(D(\psi)\big)=-\infty\\
f_{opt}^a=\min\big(D(\psi)\big)\hbox{ a.e. in }\big\{w> 0\big\}\\
f_{opt}^a=\max\big(D(\psi)\big)\hbox{ a.e. in }\big\{w< 0\big\}\\
{\rm supp}(f^s_{opt})\subset \{w=0\}.
\end{cases}\eeq
\item If $\lambda>0$, then
\beq\label{condopl>0}
\begin{cases}
\dis\psi\big(f_{opt}^a\big)+\psi^*\big(-{w\over\lambda}\big)=-{wf^a_{opt}\over\lambda}\hbox{ a.e. in }\Om\\
-\lambda c^+(\psi)\le w\le -\lambda c^-(\psi)\hbox{ a.e. in }\Om\\
{\rm supp}(f_{opt,+}^s)\subset\big\{w+\lambda c^+(\psi))=0\big\}\\
{\rm supp}(f_{opt,-}^s)\subset\big\{w+\lambda c^-(\psi))=0\big\}.
\end{cases}\eeq
\end{itemize}
Moreover, if the function $j(x,.,.)$ is convex for almost every $x\in\Om$, then the conditions stated above are not only necessary for optimality but also sufficient. \end{Thm}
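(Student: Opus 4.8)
The plan is to treat \eqref{optpb} as the minimization of the reduced functional $f\mapsto J(f)=\int_\Om j(x,\R f,f)\,dx$ over the convex admissible set $\F=\{f:\int_\Om\psi(f)\le m\}$, and to extract the stated relations from the first–order condition combined with convex duality in the constraint. The first step is to compute the directional derivative of $J$ along admissible segments. Since $\psi$ is convex, $\F$ is convex, so for $f\in\F$ the curve $f_t=(1-t)f_{opt}+tf$ stays in $\F$, and because $\R$ is linear one has $u_{f_t}=u_{f_{opt}}+t\,\R(f-f_{opt})$. Differentiating under the integral sign and using the self-adjointness of $\R$ yields
$$\frac{d}{dt}\Big|_{t=0^+}J(f_t)=\int_\Om\big[\R(\partial_sj(x,\R f_{opt},f_{opt}))+\partial_zj(x,\R f_{opt},f_{opt})\big](f-f_{opt})=\int_\Om w\,(f-f_{opt}),$$
with $w$ exactly as in \eqref{defw}. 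Optimality of $f_{opt}$ forces this to be $\ge0$ for every $f\in\F$, that is, $f_{opt}$ minimizes the \emph{linear} functional $f\mapsto\int_\Om w f$ over $\F$.

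The growth hypotheses \eqref{acdjcs}--\eqref{acdjcl} enter precisely to make the previous step rigorous. Using the regularity $\R f_{opt}\in L^p$ for $p<d/(d-2)$ together with the elliptic estimate $\R:L^{q'}\to L^\infty$ (indeed $\to C^0$) for $q'>d/2$, the exponents $\sigma<2/(d-2)$, $\tau<2/d$ and the assumption $b\in L^q$ with $q>d/2$ are tuned so that $\partial_sj(x,\R f_{opt},f_{opt})\in L^{q'}$ for some $q'>d/2$; this guarantees that $w$ is well defined and bounded (continuous in the linear case, as the support statements require) and that dominated convergence legitimates the differentiation under the integral sign. This verification is the main technical obstacle, and it is also where the split between the superlinear case ($f\in L^1$, $w\in L^\infty$) and the linear case ($f$ a measure, $j$ independent of $z$, $w\in C^0$) becomes necessary.

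It then remains to solve the convex problem $\min\{\int_\Om w f:\int_\Om\psi(f)\le m\}$. Invoking ${\rm int}(D(\psi))\neq\emptyset$ as a Slater-type constraint qualification, convex duality provides a multiplier $\lambda\ge0$ satisfying the complementary slackness \eqref{condpl}, such that $f_{opt}$ minimizes the (decoupled, pointwise) integrand $w(x)s+\lambda\psi(s)$. Carrying out this pointwise minimization gives the alternative. For $\lambda>0$, optimality of $f^a_{opt}$ is the Fenchel equality $\psi(f^a_{opt})+\psi^*(-w/\lambda)=-wf^a_{opt}/\lambda$ (equivalently $-w/\lambda\in\partial\psi(f^a_{opt})$), while finiteness of $\psi^*(-w/\lambda)$, i.e. $-w/\lambda$ lying in the slope range $[c^-(\psi),c^+(\psi)]$, produces the bounds $-\lambda c^+(\psi)\le w\le-\lambda c^-(\psi)$; the singular parts, whose cost in \eqref{psim} is weighted by the recession slopes $c^\pm(\psi)$, can charge only the sets where the marginal cost $w+\lambda c^\pm(\psi)$ vanishes, giving the support conditions. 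For $\lambda=0$ the pointwise problem reduces to minimizing $w(x)s$ over $s\in D(\psi)$, which yields the bang-bang values $f^a_{opt}=\min D(\psi)$ on $\{w>0\}$ and $f^a_{opt}=\max D(\psi)$ on $\{w<0\}$, the sign constraints on $w$ where $D(\psi)$ is unbounded, and ${\rm supp}(f^s_{opt})\subset\{w=0\}$.

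Finally, when $j(x,\cdot,\cdot)$ is convex the reduced functional $J$ is convex, being $j$ composed with the affine map $f\mapsto(\R f,f)$ and then integrated, so $J(f)\ge J(f_{opt})+\int_\Om w(f-f_{opt})$ for every $f$. Since the stated relations are exactly the (necessary and sufficient) optimality conditions of the convex problem $\min_{\F}\int_\Om w f$, they guarantee that $f_{opt}$ still minimizes the linear functional over $\F$, i.e. $\int_\Om w(f-f_{opt})\ge0$ for all $f\in\F$; combined with the tangent inequality this gives $J(f)\ge J(f_{opt})$, establishing sufficiency.
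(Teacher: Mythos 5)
Your proposal is correct and follows essentially the same route as the paper: differentiate along admissible segments using the linearity and self-adjointness of $\R$ to reduce to the linearized problem $\min\{\int_\Om wf:\int_\Om\psi(f)\le m\}$, obtain the multiplier $\lambda$ by Kuhn--Tucker/convex duality, and read off the pointwise and singular-part conditions. The only (immaterial) variation is that you derive the bounds $-\lambda c^+(\psi)\le w\le-\lambda c^-(\psi)$ from finiteness of $\psi^*(-w/\lambda)$ together with the continuity of $w$, whereas the paper obtains them pointwise on $\overline\Om$ by testing with Dirac masses $n\delta_x$ and letting $n\to\infty$.
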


\begin{proof}
Since the function $\psi$ is convex, for any $f\in{\cal M}(\Om)$ satisfying the constraint $\into\psi(f)\,dx\le m$, the mapping
$$\ep\in[0,1]\mapsto\into j\big(x,\R(f_{opt}+\ep(f-f_{opt})),f_{opt}+\ep(f-f_{opt})\big)\,dx$$
attains its minimum at $\ep=0$. Thanks to the regularity assumptions \eqref{acdjcs} or \eqref{acdjcl}, combined with the fact that $\R(f_{opt})\in L^r(\Om)$ for every $r\in [1,d/(d-2)]$, we can differentiate under the integral sign with respect to $\ep$ at $\ep=0$, leading to
\[\begin{split}
0&\le\into\Big(\partial_s j\big(x,\R(f_{opt}),f_{opt}\big)\R(f-f_{opt})+\partial_z j\big(x,\R(f_{opt}),f_{opt}\big)(f-f_{opt})\Big)\,dx\\
&=\into\Big(\R\big(\partial_s j\big(x,\R(f_{opt}),f_{opt}\big)\big)+\partial_z j\big(x,\R(f_{opt}),f_{opt}\big)\Big)(f-f_{opt})\Big)\,dx\\
&=\into w(f-f_{opt})\,dx,
\end{split}\]
where, recalling \eqref{defw}, we have set
$$w=\R\big(\partial_sj(x,\R(f_{opt}),f_{opt})\big)+\partial_zj(x,\R(f_{opt}),f_{opt}).$$
Thus, we deduce that $f_{opt}$ solves the following convex minimization problem: \beq\label{derifu}
\min\bigg\{\into wf\,dx\ :\ \into\psi(f)\le m\bigg\}.
\eeq
Applying the Kuhn-Tucker theorem, we infer the existence of a Lagrange multiplier $\lambda\ge0$ satisfying the complementary condition \eqref{condpl}, such that $f_{opt}$ is a solution to
\beq\label{conskt}
\begin{cases}
\dis\min\bigg\{\into wf\,dx+\lambda\into\psi(f)\,dx\ : \ f\in {\cal M}(\Om)\bigg\}&\hbox{if }\lambda>0\\
\dis\min\bigg\{\into wf\,dx\ :\ f\in {\cal M}(\Om),\ f^a\in D(\psi)\hbox{ a.e. in }\Om\bigg\}&\hbox{if }\lambda=0.
\end{cases}
\eeq
In particular, this shows that, almost everywhere in $\Om$, the absolutely continuous part $f^a_{opt}(x)$ solves the following pointwise minimization problem:$$\begin{cases}
\dis\min_{s\in\RR}\big\{w(x)s+\lambda\psi(s)\big\}&\hbox{if }\lambda>0\\
\dis\min_{s\in D(\psi)}w(x)s&\hbox{if }\lambda=0,
\end{cases}$$
thereby establishing the first four conditions in \eqref{condoptl0} and the first condition in \eqref{condopl>0}.

\medskip

Let us now assume that $c^+(\psi)>0$ (hence $w\in C^0(\overline\Om)$). Suppose by contradiction that there exists $x\in \overline\Om$ such that $w(x)+\lambda c^+(\psi)<0$. Then, considering the test measure $f=n\delta_{x}$ with $n>0$, and letting $n\to\infty$, we observe that the value of the minimization problem \eqref{conskt} would tend to $-\infty$, contradicting the existence of an optimal solution $f_{opt}$. Consequently, we must have $w(x)+\lambda c^+(\psi)\ge0$ for all $x\in\overline\Om$.

Furthermore, noting that for any nonnegative singular measure $f^s$ it holds
$$0\le\into(w+\lambda c^+(\psi))\,df^s_+,$$
we deduce that the support of the positive part of the singular component satisfies
$${\rm supp}(f^s_{opt,+})\subset\big\{w+\lambda c^+(\psi)=0\big\}.$$
A similar argument, considering the case $c^-(\psi)<0$, yields that
$$\begin{cases}
w+\lambda c^-(\psi)\le0\quad\hbox{in }\overline\Om,\\
{\rm supp}(f^s_{opt,-})\subset\big\{w+\lambda c^-(\psi)=0\big\}.
\end{cases}$$

\medskip

Finally, when $j(x,\cdot,\cdot)$ is convex for almost every $x\in\Om$, the original optimization problem \eqref{optpb} is itself convex. In this case, $f_{opt}$ solves \eqref{optpb} if and only if it solves the equivalent convex minimization problem \eqref{derifu}, and thus if and only if the necessary optimality conditions stated in Theorem \ref{Thcop} are satisfied.
\end{proof}

\begin{Rem}\label{coosbd}
The first condition in \eqref{condopl>0} can equivalently be reformulated in either of the following forms:
\beq\label{foecelp}
-w\in\lambda\partial\psi(f_{opt}^a)\hbox{ a.e. in }\Om\qquad\hbox{or}\qquad f_{opt}^a\in\partial\psi^*\big(-{w\over\lambda}\big)\ \hbox{ a.e. in }\Om.\eeq
The second formulation provides a characterization of the optimal control $f^a_{opt}$ directly in terms of the adjoint variable $w$.

In the present work, our primary interest is focused on the case where the optimal control $f_{opt}^a$ exhibits a bang-bang structure. According to the second condition in \eqref{foecelp}, such a behavior arises if there exists a point $s\in{\rm int}(D(\psi^*))$ where the convex conjugate $\psi^*$ fails to be differentiable. More precisely, under this assumption, we have
\beq\label{ndpas}
\partial\psi^*(s)=[d_-\psi^*(s),d_+\psi^*(s)],\qquad-\infty<d_-\psi^*(s)<d_+\psi^*(s)<\infty,\eeq
which leads to the following characterization:
\beq\label{fadiw}
\begin{cases}
f^a_{opt}(x)\ge d_+\psi^*(s)&\hbox{if }w(x)<-\lambda s\\
f^a_{opt}(x)\le d_-\psi^*(s)&\hbox{if }w(x)>-\lambda s.
\end{cases}\eeq
It is important to note that if the set $\{x\in\Om\ :\ w(x)=-\lambda s\}$ has a positive Lebesgue measure, then condition \eqref{fadiw} does not necessarily imply that $f^a_{opt}$ is discontinuous on this set.

Assuming furthermore that the function $j(x,s,z)$ is independent of $z$, and recalling that the function $w=\R(\partial_s j(x,\R(f_{opt})))$ belongs to $W^{2,q}_{loc}(\Om)$, it follows that $\Delta w=0$ almost everywhere in $\{w=s\}$, for every $s\in\RR$. Consequently, we obtain:
\beq\label{dsjnn}
\big|\big\{\partial_sj(x,\R(f_{opt}))=0\}\big|=0\Longrightarrow\big|\{w=s\}\big|=0,\quad\forall\,s\in \RR.
\eeq 
A particularly simple sufficient condition to ensure \eqref{dsjnn} is that the map $s\mapsto j(x,s)$ be either strictly increasing or strictly decreasing for each $x\in\Om$.

On the other hand, it is useful to recall that condition \eqref{ndpas} is equivalent to the relation
$$\psi(t)=st-\psi^*(s)\qquad\forall\,t\in[d_-\psi^*(s),d_+\psi^*(s)],$$
meaning that $\psi$ must be affine on an interval of positive length. Therefore, a necessary condition on the function $\psi$ for the appearance of bang-bang optimal controls is the existence of a bounded interval with nonempty interior on which $\psi$ is affine, that is, the function $\psi$ must fail to be strictly convex over some nontrivial subinterval.
\end{Rem}

\begin{Rem}
By an argument similar to the one of Remark \ref{rqgr}, the growth conditions imposed on the function $j$ and its derivatives in Theorem \ref{Thcop} can be relaxed when the function $\psi$ satisfies the condition \eqref{concqpsi}. Specifically, when $q>d/2$, it suffices to require that, for every $n>0$,
$$|j(x,s,z)|+|\partial_sj(x,s,z)|\le a_n(x)+c_n|z|^q\qquad\hbox{for }|s|<n,$$
where $a_n\in L^1(\Om)$ and $c_n>0$ are given, and similarly,
$$|\partial_zj(x,s,z)|\le b_n(x)+\gamma_n|z|^{q-1}\qquad\hbox{for }|s|<n,$$
with $b_n\in L^{q/(q-1)}(\Om)$ and $\gamma_n>0$.
\end{Rem}

We are now ready to illustrate the application of Theorem \ref{Thcop} through several important examples of the function $\psi$.

\begin{Exa}
Let us now consider the case where $\psi(s)=|s|$. In this setting, problem \eqref{optpb}, under the assumption that $j(x,s,z)$ is independent of $z$ and satisfies the growth conditions \eqref{acdjcl}, can be rewritten as:
\beq\label{pbme1}
\min\bigg\{\int_\Om j(x,\R(f))dx\ :\ \|f\|_{\M(\Om)}\le m\bigg\}.\eeq
In order to apply Theorem \ref{Thcop} together with the characterization provided in Remark \ref{coosbd}, we first observe the properties of the convex conjugate $\psi^*$, namely:
$$\psi^*(t)=\begin{cases}
0&\hbox{if }t\in[-1,1]\\
+\infty&\hbox{otherwise,}
\end{cases}\qquad\partial\psi^*(t)=\begin{cases}
[-\infty,0]&\hbox{if }t=-1\\
0&\hbox{if }t\in(-1,1)\\
[0,\infty]&\hbox{if }t=1.
\end{cases}$$
If $\lambda=0$ in the framework of Theorem \ref{Thcop}, then, according to condition \eqref{condoptl0} and the fact that $D(\psi)=\RR$, the optimality system simply reduces to
$$w=\R\big(\partial_sj(x,\R(f_{opt})\big)=0\qquad\text{almost everywhere in }\Om,$$ which is equivalent to the condition:
$$\partial_sj(x,\R(f_{opt}))=0\qquad\hbox{a.e. in }\Om.$$
Let us assume now that we are not in this degenerate case, so that $\lambda>0$. In this case, Theorem \ref{Thcop} combined with the optimality conditions \eqref{foecelp} yield the following set of properties:
$$\begin{cases}
-\lambda\le w\le\lambda\hbox{ a.e. in }\Om,\\
{\rm supp}(f_{opt})\subset\{|w|=\lambda\},\\
f_{opt}\ge0\hbox{ in }\{w=-\lambda\},\\
f_{opt}\le0\hbox{ in }\{w=\lambda\},\\
\|f\|_{{\cal M}(\Om)}=m.
\end{cases}$$
In particular, let us consider the situation where the function $s\mapsto j(x,s)$ is non-decreasing for almost every $x\in\Om$. In this case, we have $\partial_s j(x,\cdot)\ge0$, which, by the maximum principle applied to $w$, implies that $w\ge0$ almost everywhere in $\Om$. Therefore, $w$ satisfies $0\le w\le\lambda$ a.e. in $\Om$, and the support of the optimal control is contained in the set $\{w=\lambda\}$, with $f_{opt}\le0$.

For instance, if $\Om$ is a ball centered at the origin and $j(x,s)=s$, the solution simplifies further, and the optimal control is given explicitly by:
$$f_{opt}=-m\delta_0.$$
where $\delta_0$ denotes the Dirac mass at the origin.

An entirely similar analysis can be carried out when $j(x,\cdot)$ is non-increasing, leading to the symmetric case.
\end{Exa}

\begin{Exa}\label{ex45}
In connection with problem \eqref{pbme1}, let us now consider the variational problem \beq\label{pbme2}
\min\bigg\{\int_\Om j(x,\R(f))\,dx\ :\ f\ge0,\ \into f\,dx\le m\bigg\}.
\eeq
In this context, the function $\psi$ is given by
$$\psi(s)=\begin{cases}
s&\hbox{if }s\ge0\\
+\infty&\hbox{if }s<0,
\end{cases}$$
and its convex conjugate $\psi^*$ takes the form
$$\psi^*(t)=\begin{cases}
0&\hbox{if }t\le1\\
\infty&\hbox{if }t>1,
\end{cases}$$
with
$$\partial\psi^*(t)=\begin{cases}
0&\hbox{if }t<1\\
[0,\infty]&\hbox{if }t=1.
\end{cases}$$
Let $f_{opt}$ be an optimal solution to problem \eqref{pbme2}. Then, by applying the optimality conditions \eqref{condoptl0} and \eqref{condopl>0}, we infer the existence of a Lagrange multiplier $\lambda\ge0$ such that
$$\begin{cases}
\dis\lambda\Big(\into\psi(f_{opt})dx-m\Big)=0\\
w\ge-\lambda\hbox{ a.e. in }\Om\\
{\rm supp}(f_{opt})\subset\{w=-\lambda\}.
\end{cases}$$
This result admits a more refined characterization under additional assumptions. Suppose that for almost every $x\in\Om$, the function $s\mapsto j(x,s)$ is strictly concave. In that case, the optimal source $f_{opt}$ must be an extremal point of the admissible set
$$\bigg\{f\ge0\ :\ \int_\Om f\,dx\le m\bigg\}.$$
Consequently, the optimal solution must be a singular measure supported at a point, that is, a multiple of a Dirac delta. Assume furthermore that for almost every $x\in\Om$, the function $j(x,\cdot)$ attains its maximum at $s=0$. Since $f_{opt}\ge0$, it follows that $\R(f_{opt})\ge0$, and hence,
$$\partial_sj(x,\R(f_{opt}))\le\partial_sj(x,0)\le0\qquad\hbox{a.e. in }\Om,$$
implying that the adjoint state $w=\R(\partial_s j(x,\R(f_{opt})))\le0$ almost everywhere in $\Om$. The case where $w=0$ a.e. leads to a contradiction, as it would imply $\R(f_{opt})=0$ a.e., which would in turn correspond to the maximum, not the minimum, of the functional in \eqref{pbme2}. Thus, we conclude that the Lagrange multiplier $\lambda$ must be strictly positive, and we obtain the refined optimality condition:
\beq\label{cone2cc}
-\lambda\le w\le0\ \hbox{ a.e. in }\Om,\qquad f_{opt}=m\delta_{x_0}\quad\hbox{with }w(x_0)=-\lambda.
\eeq
As a concrete example, consider the maximization problem
\beq\label{maxlp}
\max\bigg\{\int_\Om|\R(f)|^p\,dx\ :\ f\geq 0,\quad \into f\,dx\le m\bigg\}.
\eeq
It is readily seen that if $p\ge d/(d-2)$, the functional is unbounded above and the supremum is infinite, hence no optimal solution exists. However, when $p<d/(d-2)$, the problem admits a solution $f_{opt}$, and it satisfies the structure described in \eqref{cone2cc}.
\end{Exa}

\begin{Exa}\label{exbang}
Let us consider the following optimization problem:
\beq\label{pbme3}\min\bigg\{\int_\Om j\big(x,\R(f),f\big)\,dx\ :\ \int_\Om f\,dx\le m,\ \alpha\le f\le\beta\bigg\},\eeq
subject to the bounds 
\beq\label{coabe3}
\alpha|\Om|<m\le\beta|\Om|,
\eeq
where $\alpha$ and $\beta$ are real constants. Without loss of generality, and to simplify the exposition, we assume $\alpha\ge0$; the treatment of other cases (e.g., when $\alpha<0$) follows in a similar way. The admissible set is naturally associated with the function
$$\psi(s)=\begin{cases}
s&\text{if }s\in[\alpha,\beta]\\
+\infty&\text{otherwise,}
\end{cases}$$
whose convex conjugate is given by
$$\psi^*(t)=\begin{cases}
(t-1)\alpha&\text{if }t\le1\\
(t-1)\beta&\text{if }t\ge1,
\end{cases}$$
with
$$\partial\psi^*(t)=\begin{cases}
\alpha&\text{if }t<1\\
[\alpha,\beta]&\text{if }t=1\\
\beta &\text{if }t>1.
\end{cases}$$
By Theorem \ref{Thcop}, any optimal solution $f_{opt}$ of problem \eqref{pbme3} must satisfy the pointwise condition
\beq\label{fopte3}
f_{opt}=\begin{cases}
\beta&\hbox{if }w<-\lambda\\
\alpha&\hbox{if }w>-\lambda,
\end{cases}\eeq
where $w$ is the adjoint state defined via \eqref{defw}, and $\lambda\ge0$ is a Lagrange multiplier associated with the volume constraint, satisfying the complementary condition
\beq\label{cosae3}\lambda\left(\into f\,dx-m\right)=0.\eeq
Since the adjoint variable $w$ is known to vanish on the boundary $\partial\Om$ due to the properties of $\R$, the structure of the optimal solution $f_{opt}$ is particularly simple when the following conditions occur:
$$\lambda>0,\qquad\big|\big\{w<-\lambda\}\big|>0,\qquad\big|\big\{w=-\lambda\}\big|=0.$$
Under these hypotheses, the optimal control $f_{opt}$ is of bang-bang type; that is, it takes only the extremal values $\alpha$ and $\beta$ almost everywhere in $\Om$.

Let us now examine how the qualitative nature of $f_{opt}$ depends on the structure of the integrand $j$. Assume that the function $j(x,s,z)$ is independent of $z$ and is either non-decreasing or non-increasing in the variable $s$. In the first case, where $j$ is non-decreasing in $s$, the adjoint state is non-negative:
$$w=\R(\partial_sj(x,\R(f_{opt})))\ge0.$$
Then, from \eqref{fopte3}, it follows that $f_{opt}=\alpha$ almost everywhere in $\Om$.

In contrast, if $j$ is non-increasing in $s$, then $w\le0$ a.e. in $\Om$. Suppose that the measure of the set $\{w<-\lambda\}$ is zero. Then, again from \eqref{fopte3}, we have $f_{opt}=\alpha$ a.e., and so
$$\into f_{opt}\,dx=\alpha |\Om|<m,$$
which implies, by \eqref{cosae3}, that $\lambda=0$. Consequently, the adjoint state $w$ must vanish identically, and $\partial_s j(x,\R(\alpha))=0$ as well. This is only possible if for a.e. $x\in\Om$ the function $j(x,\cdot)$ is constant in the interval $[\R(\alpha),0]$ or in the interval $[0,\R(\alpha)]$ (depending on the sign of $\alpha$). 

If this constancy condition is not satisfied, then necessarily $\lambda>0$, and the volume constraint $\into f\,dx\le m$ is saturated. In this situation, the function $f_{opt}$ takes both values $\alpha$ and $\beta$, as described by \eqref{fopte3}. In particular, this occurs whenever $j(x,\cdot)$ is strictly decreasing, in which case the condition $|\{w=-\lambda\}|=0$ is also satisfied, and $f_{opt}$ is indeed a bang-bang control.
\end{Exa}

\begin{Exa}\label{exbang2}
Another interesting example corresponds to 
$$\min\bigg\{\int_\Om\big|\R(f)-u_0\big|^2\,dx\ :\ \int_\Om f\,dx\le m,\  \alpha\le f\le\beta \bigg\},$$
with $u_0\in L^2(\Om)$ prescribed and $m$ satisfying (\ref{coabe3}). This case has been studied, with $\alpha=0$ and $\beta=1$, in \cite{LTZ}. Since this functional is strictly convex, the solution is unique and (\ref{fopte3}), (\ref{cosae3}) are necessary and sufficient conditions for $f_{opt}$, where now $w=2\R\big(\R(f_{opt})-u_0\big)$.

Since $f_{opt}\in[\alpha,\beta]$, we have $\R(f_{opt})\in[\R(\alpha),\R(\beta)]$. If $u_0\le\R(\alpha)$ a.e in $\Om$, the maximum principle gives $\R(\R(\alpha)-u_0)\geq 0$ in $\Om$ and then $f_{opt}=\alpha$ satisfies (\ref{fopte3}) with $\lambda=0$. Analogously, if $u_0\ge\R(\beta)$ a.e. in $\Om$, then $f_{opt}=\beta$.

Assume $u_0\in \big[{\cal R}(\alpha),{\cal R}(\beta)]$ a.e. in $\Om$ and $u_0\not \equiv {\cal R}(\alpha)$, $u_0\not\equiv {\cal R}(\beta)$. If $f_{opt}=\alpha$ a.e. in $\Om$, the strong maximum principle gives $w>0$ in $\Om$ while 
$$\into f\,dx=\alpha|\Om|<m$$
implies $\lambda=0$. By (\ref{fopte3}) we conclude that $f_{opt}=\beta$, in contradiction with $f_{opt}=\alpha$. Similarly, if $f_{opt}=\beta$ a.e. in $\Om$, we get $w> 0$ a.e. in $\Om$ in contradiction with (\ref{fopte3}). Taking into account (\ref{fopte3}) we then deduce that $\big|\{w=\lambda\}\big|=0$ implies that $f_{opt}$ is a bang-bang control.

Another case in which $f_{opt}$ is of bang-bang type, again deduced from the necessary conditions of optimality \eqref{fopte3}, is when $-\Delta u_0\geq \beta$ a.e. in $\Om$ and $u_0\geq 0$ on $\partial\Om$.
\end{Exa}

\begin{Exa} Let us now consider an example where $\psi$ is strictly convex. By Remark \ref{coosbd} the optimal controls are not of bang-bang type. We take
$$\min\bigg\{\int_\Om j\big(x,\R(f),f\big)\,dx\ :\ \int_\Om f^2dx\le m\bigg\},\qquad m>0.$$
Now,
$$\psi(s)=s^2,\qquad\psi^*(s)={t^2\over 4},\qquad\partial\psi^*(t)={t\over2}.$$
Therefore, if $f$ is an optimal solution and $w$ is given by (\ref{defw}) we have the existence of $\lambda\geq 0$ such that
$$w=0\ \hbox{ a.e. in }\Om\qquad\hbox{or}\qquad f_{opt}={\sqrt{m}\,w^2\over \|w\|^2_{L^4(\Om)}}.$$
In the second case $f_{opt}$ is a continuous function by the summability assumptions on $j$ and their derivatives.
\end{Exa}

\begin{Exa}\label{compliance}
Consider the {\it compliance case}
\beq\label{compp}\min\bigg\{\int_\Om f\R(f)\,dx\ :\ \int_\Om f\,dx\ge m, \,\alpha\le f\le\beta \bigg\},\eeq
and assume $0\le\alpha<\beta$. To have a nontrivial problem we also assume $\alpha|\Om|<m<\beta|\Om|$. Using an integration by parts we have
$$\int_\Om f\R(f)\,dx=-2\E(f)$$
where $\E(f)$ is the energy
$$\E(f)=\min\bigg\{\int_\Om\Big(\frac12|\nabla u|^2-fu\Big)\,dx\ :\ u\in H^1_0(\Om)\bigg\},$$
and thus the optimization problem can be reformulated as
$$\max\bigg\{\E(f)\ :\ \int_\Om f\,dx\ge m,\ \alpha\le f\le\beta\bigg\}.$$
Similarly to example (\ref{exbang}) we have
$$\psi(s)=\begin{cases}
-s&\text{if }s\in[\alpha,\beta]\\
+\infty&\text{otherwise,}\end{cases}$$
$$\psi^*(t)=\begin{cases}
(t+1)\alpha &\text{if }t\leq -1\\
(t+1)\beta&\text{if }t\ge-1,\end{cases}\qquad \partial\psi^*(t)=\begin{cases}
\alpha&\text{if }t<-1\\
[\alpha,\beta]&\text{if }t=-1\\
\beta &\text{if }t>-1,
\end{cases}$$
and that $m$ in Theorem \ref{Thcop} must be chosen as $-m$.\par
Since $j(x,s,z)=sz$, we have that for a solution $f_{opt}$ of (\ref{compp}), the function $w$ defined by (\ref{defw}) is given by $w=2\R(f_{opt})$, where $f_{opt}\in [\alpha,\beta]$ implies $\R(f_{opt})$ strictly positive in $\Om$. Thus, Theorem \ref{Thcop} proves the existence of $\lambda>0$ such that
$$f_{opt}=\left\{\ba{ll}\dis \beta &\hbox{ if }\R(f_{opt})<\lambda\\ \ecart\dis \alpha &\hbox{ if }\R(f_{opt})>\lambda,\ea\right.\qquad
\into f_{opt}ds=m.$$
Moreover, as we saw in Remark \ref{coosbd} the set $\{\R(f_{opt})=\lambda\}$ has null measure. We are then in the bang-bang situation $f_{opt}=\alpha1_E+\beta1_{\Om\setminus E}$ for $E=\{\R(f_{opt})>\lambda\}$.
\end{Exa}

\section{Regularity of optimal sources}\label{snreg}

We have seen in Section \ref{snec} that if the function $\psi$ in (\ref{optpb}) is not strictly convex, then the optimal solutions are of bang-bang type, where the interfaces are given by $\{w=s\}$, with $w$ defined by (\ref{defw}) and $s\in\RR$ (indeed, if this set has positive measure, then the optimal control could be continuous). The question we consider in the present section is to get some regularity results for bang-bang optimal solutions. Since they are discontinuous, we can ask whether they are $BV$ functions, that is, whether the set $\{w=s\}$ has a finite perimeter.

\subsection{$BV$ regularity}\label{ss:bv}

As a model problem, we can consider the compliance case of Example \ref{compliance}:
\beq\label{complper}
\min\bigg\{\int_\Om f\R(f)\,dx\ :\ \int_\Om f\,dx\ge m,\ f(x)\in[\alpha,\beta]\bigg\},
\eeq
with $0\le\alpha<\beta$ and $\alpha|\Om|<m<\beta|\Om|$. We have seen that the optimal solution $f_{opt}$ is of bang-bang type, that is
$$f_{opt}=\alpha1_E+\beta1_{\Om\setminus E}\qquad\hbox{with }E=\{\R(f_{opt})<s\},$$
for some positive constant $s$ that has to be chosen such that the integral constraint $\int_\Om f\,dx\ge m$ is saturated. The function $u=\R(f_{opt})$ thus solves te PDE
$$\begin{cases}
-\Delta u=\beta1_{\{u<s\}}+\alpha1_{\{u>s\}}&\text{in }\Om\\
u=0&\text{on }\partial\Om.
\end{cases}$$

\begin{Thm}\label{perimeter}
The optimal solution $f_{opt}$ of the minimization problem \eqref{complper} is in $BV(\Om)$, hence the optimal set $E$ above has a finite perimeter
\end{Thm}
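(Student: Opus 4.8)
\medskip

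The plan is to show that $u=\R(f_{opt})$ has enough regularity to force $\{u=s\}$ to be a small set and to obtain uniform bounds on $\nabla u$ near the interface, from which the finite perimeter of $E=\{u<s\}$ will follow. The starting point is the PDE
$$\begin{cases}
-\Delta u=\beta 1_{\{u<s\}}+\alpha 1_{\{u>s\}}&\text{in }\Om\\
u=0&\text{on }\partial\Om.
\end{cases}$$
Since the right-hand side is bounded (it takes values in $[\alpha,\beta]\subset[0,\beta]$), standard elliptic regularity gives $u\in W^{2,p}_{loc}(\Om)$ for every $p<\infty$, hence $u\in C^{1,\gamma}_{loc}(\Om)$ for every $\gamma<1$. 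Thus $\nabla u$ is continuous, and the interface $\Gamma=\{u=s\}$ is the level set of a $C^1$ function. First I would record that, because $\Delta u=-\beta<0$ on $\{u<s\}$ and $\Delta u=-\alpha$ on $\{u>s\}$, and because $\Delta u=0$ a.e.\ on $\{u=s\}$ by the $W^{2,p}$ membership, the set $\{u=s,\ \nabla u=0\}$ carries no measure obstruction: on the portion of $\{u=s\}$ where $\nabla u\neq0$ the implicit function theorem already yields a $C^1$ hypersurface of locally finite $\H^{d-1}$ measure.

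\medskip

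The crux is therefore to control the degenerate part of the level set, where $\nabla u=0$, and to turn local finiteness of the perimeter into a global bound. For the global bound I would use the coarea formula applied to $u$: for a.e.\ level $t$ the set $\{u=t\}$ has finite perimeter, and
$$\int_{\RR}\operatorname{Per}(\{u<t\})\,dt=\int_\Om|\nabla u|\,dx<\infty,$$
which already gives finiteness for almost every level. The difficulty is that the distinguished level $s$ is fixed by the saturation of the volume constraint $\into f\,dx\ge m$ and need not be a "good" level a priori. To handle the specific level $s$, I would exploit that $-\Delta u\ge\alpha\ge0$, so $u$ is superharmonic and, more usefully, that on a neighbourhood of $\Gamma$ the function $u$ satisfies a one-phase type obstacle structure: subtracting the quadratic comparison $v=u-s+\tfrac{\beta}{2d}|x-x_0|^2$ locally makes $-\Delta v=\beta(1-1_{\{u<s\}})\le 0$ available for a nondegeneracy argument. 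Nondegeneracy of $u-s$ at free-boundary points (the standard $\sup_{B_r}|u-s|\ge c r^2$ estimate) together with the $C^{1,\gamma}$ bound is exactly what upgrades local finiteness of $\H^{d-1}(\Gamma)$ to a genuine perimeter bound independent of the level being generic.

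\medskip

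The main obstacle I anticipate is precisely this passage at the single fixed level $s$: proving that $\{u=s\}$ has finite perimeter cannot rest on the coarea formula alone (which only controls almost every level), and must instead invoke the obstacle-problem regularity theory for the two-phase source $\beta 1_{\{u<s\}}+\alpha 1_{\{u>s\}}$. Concretely, I expect the argument to reduce to showing that $\Gamma$ has zero Lebesgue measure together with a uniform density/nondegeneracy estimate near $\Gamma$, and then to invoke the standard result that the solution of an obstacle-type problem with bounded discontinuous right-hand side of this bang-bang form has a free boundary of locally finite perimeter. If one prefers a softer route avoiding the full free-boundary machinery, I would instead perturb the volume constraint slightly so that $s$ becomes a coarea-good level, obtain the perimeter bound there, and pass to the limit using lower semicontinuity of the perimeter together with the $L^1$ convergence $1_{E_\delta}\to 1_E$; the delicate point is ensuring the bound is uniform as $\delta\to0$, which again forces one back to the nondegeneracy estimate. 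I would present the obstacle-theory version as the main line and mention the perturbative argument as an alternative.
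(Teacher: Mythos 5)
There is a genuine gap, and it sits exactly where you yourself locate the difficulty: the finiteness of the perimeter of $\{u<s\}$ at the distinguished level $s$, including the degenerate part of $\{u=s\}$ where $\nabla u=0$. Everything before that point (interior $W^{2,p}$ regularity, the implicit function theorem on $\{\nabla u\ne0\}$, the coarea formula for a.e.\ level) is fine but does not touch the actual difficulty, and the step that would close the argument is only asserted: you ``invoke the standard result that the solution of an obstacle-type problem with bounded discontinuous right-hand side of this bang-bang form has a free boundary of locally finite perimeter.'' No such result applies off the shelf here. The equation $-\Delta u=\beta1_{\{u<s\}}+\alpha1_{\{u>s\}}$ is \emph{not} the classical obstacle problem: $u-s$ has no sign, the source jumps from $\beta$ to $\alpha>0$ (not to $0$) across the level set, and Caffarelli-type free-boundary regularity for such no-sign/two-phase structures requires its own nondegeneracy and density estimates (typically via monotonicity formulas). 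Your sketch of nondegeneracy only produces one inequality (subharmonicity of $u-s+\tfrac{\beta}{2d}|x-x_0|^2$ on one side), which is far from the two-sided quadratic growth needed, and without it neither the main line nor the perturbative alternative (where, as you note, lower semicontinuity gives the wrong-direction inequality and a uniform bound on $P(\{u<t_\delta\})$ as $t_\delta\to s$ is precisely what is missing) can be completed.

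For comparison, the paper does not go through free-boundary theory at all: the proof is a direct appeal to Theorem 3.5 of \cite{BCM}, where the $BV$ bound for the optimal bang-bang potential is obtained by integral estimates on the solution itself (testing the differentiated equation, controlling quantities of the type $|D^2u|^2/|\nabla u|$), in the same spirit as the computations of Subsection \ref{ss:weaker} of the present paper. So the intended argument is an a priori estimate on $u=\R(f_{opt})$ rather than a local analysis of the free boundary. If you want to pursue your route, the honest version of the task is to prove, for this specific two-phase right-hand side, a nondegeneracy estimate and a density estimate at every point of $\partial\{u<s\}$; as written, the proposal replaces the theorem to be proved by a citation to a result that has not been shown to cover this case.
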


\begin{proof}
It is enough to apply Theorem 3.5 of \cite{BCM}.
\end{proof}

\subsection{A weaker regularity}\label{ss:weaker}

Similarly to the above example, Theorem \ref{Thcop} and Remark \ref{coosbd} with $j(x,s,z)$ independent of $z$ prove that for bang-bang optimal controls, the interfaces are of the form $\{u=s\}$ with $u$ the solution of the PDE
$$\begin{cases}
-\Delta u=f&\hbox{in }\Om\\
u=0&\hbox{on }\partial\Om,
\end{cases}$$
where we set $f=\partial_sj\big(x,\R(f_{opt})\big)$. Some results about the regularity of the level sets of the solution of the above problem are simple to obtain. On the one hand, if $f\in L^q(\Om)$, with $q>d$ then $u$ is in $C^1(\overline \Om)$. Thus, the implicit function theorem proves that for every $s\in \RR$ the set
$$\{u=s\}\cap \{\nabla u\not =0\}$$
is a $C^1$ manifold. On the other hand, for $u$ just in $BV(\Om)$ the coarea formula (\cite{EvGa}, Chapter 5) gives
$$\into d|\nabla u|=\int_\RR\|\nabla 1_{\{u>s\}}\|_{{\cal M}(\Om)}\,ds.$$
Thus, except for $s$ in a subset of $\RR$ of null Lebesgue measure we have 
\beq\label{regbvls} 1_{\{u>s\}}\in BV(\Om).\eeq 
 
The question is now if, adding some assumptions on $f$, property (\ref{regbvls}) holds for every $s\in \RR$. Since the difficulties appear in the set $\{\nabla u=0\}$, let us assume that $f$ is positive in $\Om$ (by linearity, if $f$ is negative the argument is similar) in such way that this set has null Lebesgue measure.

The result below is slightly weaker than (\ref{regbvls}). We will only prove that for any $q>1$, we have
$$\log^{-q}\Big({1\over|\nabla u|}\vee e\Big) 1_{\{u>s\}}\in BV(\Om).$$
Observe that the factor $\log^{-q}\big({1/|\nabla u|}\vee e\big) $ vanishes on the ``bad set'' $\{\nabla u=0\}$ but it goes to zero very slowly with respect to $\nabla u$.

In the following, for a connected bounded open set $\Om\subset\RR^d$, $d\ge2$, we deal with $\R(f)$ solution of
\beq\label{EcLa}
\begin{cases}
-\Delta u=f&\text{in }\Om\\
u=0&\text{on }\partial\Om.
\end{cases}\eeq
We start with the following estimates for the solution of (\ref{EcLa})

\begin{Thm} \label{TeEst} 
Assume $\Om$ of class $C^{1,1}$; then for every $f\in BV(\Om)$, there exists $C>0$, which only depends on $\Om$ such that $u=\R(f)$ satisfies
\beq\label{est1} \into{1\over |\nabla u|}\Big|D^2u\Big(I-{\nabla u\otimes\nabla u\over |\nabla u|^2}\Big)\Big|^2dx\leq C\|f\|_{BV(\Om)},
\eeq 
\beq\label{est2}\int_{\{|\nabla u|<1/e\}} {|D^2u\nabla u|^2\over |\nabla u|^3\log^q\big({1\over|\nabla u|}\big)}dx
\leq {C\over q-1}\|f\|_{BV(\Om)},\quad\forall\, q>1. 
\eeq
Moreover, if $f$ satisfies 
\beq\label{positf} \exists \alpha>0\ \hbox{ such that }\ f\geq \alpha\ \hbox{ in }\Om.\eeq
then, for every $q>1$ and every $\ep>0$, we have
\beq\label{est3}\into {1\over |\nabla u|\log^q\big({1\over |\nabla u|}\vee e\big)}dx\leq C{q^2\over \alpha^2(q-1)}\|f\|_{BV(\Om)}+{H_{d-1}(\partial\Om)\over\alpha},\eeq
\beq\label{est4} {1\over \ep}\int_{\{s<u<s+\ep\}} {|\nabla u|\over \log^q\big({1\over |\nabla u|}\vee e\big)}\,dx\le C{q^2\over \alpha(q-1)}\|f\|_{BV(\Om)}+H_{d-1}(\partial\Om),\eeq
where $H_{d-1}$ denotes the $(d-1)$-Hausdorff measure in $\RR^N$.
\end{Thm}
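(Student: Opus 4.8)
The plan is to reduce all four estimates to a single Bochner-type computation. Writing $Q=I-\frac{\nabla u\otimes\nabla u}{|\nabla u|^2}$ for the projection tangent to the level sets, I would start from $\frac12\Delta|\nabla u|^2=|D^2u|^2-\nabla u\cdot\nabla f$ (the Bochner formula combined with $-\Delta u=f$) together with the orthogonal splitting $|D^2u|^2=\frac{|D^2u\,\nabla u|^2}{|\nabla u|^2}+|D^2u\,Q|^2$. After the two radial contributions cancel, this yields $|\nabla u|\,\Delta|\nabla u|=|D^2u\,Q|^2-\nabla u\cdot\nabla f$. To handle the critical set I would carry out the computation on the regularization $v_\delta=\sqrt{|\nabla u|^2+\delta}$, obtaining
$$\Delta v_\delta=\frac{|D^2u\,Q|^2}{v_\delta}+\frac{\delta\,|D^2u\,\nabla u|^2}{|\nabla u|^2\,v_\delta^3}-\frac{\nabla u\cdot\nabla f}{v_\delta},$$
in which both curvature terms are nonnegative. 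Everything would first be justified for smooth $f$, the general $BV$ case being recovered by approximation with lower semicontinuity (Fatou) applied to the nonnegative left-hand sides; on $\{\nabla u=0\}$ one uses that $D^2u=0$ a.e.\ so that the regularized integrands converge to the stated ones.

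For \eqref{est1}, integrating the $v_\delta$-identity and using the divergence theorem gives
$$\into\frac{|D^2u\,Q|^2}{v_\delta}\,dx\le\int_{\partial\Om}\partial_\nu v_\delta\,dH_{d-1}+\into\frac{\nabla u\cdot\nabla f}{v_\delta}\,dx.$$
The volume term is controlled by $|Df|(\Om)\le\|f\|_{BV(\Om)}$ since $|\nabla u|/v_\delta\le1$. The boundary term is where the $C^{1,1}$ geometry enters: from $u=0$ on $\partial\Om$ one gets $\nabla u=(\partial_\nu u)\nu$ and, by splitting the Laplacian into its normal and tangential parts on $\partial\Om$, $\partial_\nu|\nabla u|=-\sgn(\partial_\nu u)\,f-H|\nabla u|$ with $H$ the (bounded) mean curvature; elliptic regularity and the $BV$ trace theorem then bound $\int_{\partial\Om}|f|\,dH_{d-1}$ and $\int_{\partial\Om}|\nabla u|\,dH_{d-1}$ by $C\|f\|_{BV(\Om)}$. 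Letting $\delta\to0$ and applying Fatou yields \eqref{est1}. For \eqref{est2} I would test the same identity against $\phi(|\nabla u|)$ with $\phi(t)=\int_t^{1/e}\frac{ds}{s\log^q(1/s)}$ for $t<1/e$ and $\phi\equiv0$ otherwise, so that $0\le\phi\le\frac1{q-1}$ and $-\phi'(t)=\frac{1}{t\log^q(1/t)}1_{\{t<1/e\}}$. Integration by parts turns the left-hand side into exactly $\int_{\{|\nabla u|<1/e\}}\frac{|D^2u\,\nabla u|^2}{|\nabla u|^3\log^q(1/|\nabla u|)}\,dx$, after using $|\nabla|\nabla u||^2=|D^2u\,\nabla u|^2/|\nabla u|^2$, while on the right the term $\phi\,\frac{|D^2u\,Q|^2}{|\nabla u|}$ is bounded by $\frac1{q-1}$ times \eqref{est1}, and the remaining $\nabla u\cdot\nabla f$ and boundary contributions are each $\le\frac{C}{q-1}\|f\|_{BV(\Om)}$ exactly as above.

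Under the positivity assumption \eqref{positf} I would integrate the divergence of the field $X=\nabla u\,\eta(|\nabla u|)$ with $\eta(t)=\frac1{t\log^q(1/t\vee e)}$. Since $\dive X=-f\,\eta(|\nabla u|)+\frac{D^2u\,\nabla u\cdot\nabla u}{|\nabla u|}\eta'(|\nabla u|)$ and $X\cdot\nu=-|\nabla u|\,\eta(|\nabla u|)$ on $\partial\Om$, the divergence theorem and $f\ge\alpha$ give, with $A:=\into\frac{1}{|\nabla u|\log^q(1/|\nabla u|\vee e)}\,dx$,
$$\alpha A\le\into\frac{|D^2u\,\nabla u\cdot\nabla u|}{|\nabla u|}\,|\eta'(|\nabla u|)|\,dx+\int_{\partial\Om}\frac{1}{\log^q(1/|\nabla u|\vee e)}\,dH_{d-1}.$$
Using $|\eta'(t)|\le Cq\,t^{-2}\log^{-q}(1/t)$ and Cauchy--Schwarz, the first term is at most $Cq\,A^{1/2}$ times the square root of \eqref{est2}, i.e.\ $Cq\big(\tfrac{1}{q-1}\|f\|_{BV(\Om)}\big)^{1/2}A^{1/2}$, while the boundary term is $\le H_{d-1}(\partial\Om)$; Young's inequality absorbs $A^{1/2}$ and produces \eqref{est3}, the factors $q^2/(q-1)$ and $1/\alpha^2$ arising precisely from this bootstrap. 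Finally \eqref{est4} is the level-set counterpart: running the same divergence identity on the super-level sets $\{u>s\}$ replaces the boundary integral by $\int_{\{u=s\}}\log^{-q}(1/|\nabla u|\vee e)\,dH_{d-1}$, and the coarea formula identifies $\frac1\ep\int_{\{s<u<s+\ep\}}\frac{|\nabla u|}{\log^q(1/|\nabla u|\vee e)}\,dx$ with the average of this weighted perimeter over $(s,s+\ep)$, which the previous steps bound by $C\frac{q^2}{\alpha(q-1)}\|f\|_{BV(\Om)}+H_{d-1}(\partial\Om)$.

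I expect the principal difficulties to be the rigorous treatment of the critical set $\{\nabla u=0\}$ and, above all, the control of every boundary contribution via the $C^{1,1}$ geometry and $BV$ trace theory under the sole assumption $f\in BV(\Om)$ (so that $u$ is only $W^{2,d/(d-1)}$); the self-referential use of \eqref{est2} inside the proof of \eqref{est3}, closed by Young's inequality, is the other step that must be handled with care.
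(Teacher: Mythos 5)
Your proposal is correct and follows essentially the same route as the paper: the Bochner identity for $v_\delta=\sqrt{|\nabla u|^2+\delta}$ with the orthogonal splitting of $|D^2u|^2$ is just the divergence-form repackaging of the paper's computation, which tests $-\Delta\partial_iu=\partial_if$ against $\partial_iu\,\zeta(|\nabla u|+\delta)^{q-1}/(|\nabla u|+\delta)$ and uses the same boundary identity $-D^2u\,\nu\cdot\nu=f+h\cdot\nabla u$, the same smooth approximation of $f\in BV(\Om)$, and the same monotone-convergence passage $\delta\to0$. Likewise, your derivation of \eqref{est3}--\eqref{est4} by multiplying the equation by $\eta(|\nabla u|)$, using $f\ge\alpha$, closing via Cauchy--Schwarz/Young against \eqref{est2}, and invoking the coarea formula on level sets is exactly the paper's argument.
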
\noindent
\begin{proof}
It is enough to prove the result for $\Om$ of class $C^{2,\alpha}$, $\alpha>0$, $f\in C^{2,\alpha}(\overline \Om)$, and then $u\in C^{2,\alpha}(\overline \Om)$. The general case follows by an approximation argument, recalling that for every $f\in BV(\Om)$, there exists $f_n\in C^\infty(\overline\Om)$ such that
$$f_n\to f\ \hbox{ in }L^{d/(d-1)}(\Om),\quad \|\nabla f_n\|_{L^1(\Om)^d}\to \|\nabla f_n\|_{{\cal M}(\Om)^d},$$
and that the Calderon-Zygmund theorem implies that $u_n$ satisfies
$$ u_n\to u\ \hbox{ in }W^{2,{d/(d-1)}}(\Om).$$
In the following we define $\zeta:(0,\infty)\to \RR$ by
$$\zeta(s)=\begin{cases}
0&\text{if }s=0\\
\dis{1\over\log({1\over s}\vee e)}&\text{if }0<s.
\end{cases}$$
Let us prove (\ref{est1}), (\ref{est2}). We use that the derivatives of $u$ satisfy (see \cite{CaCoVa})
$$\begin{cases}
-\Delta\partial_i u=\partial_i f&\hbox{in }\Om,\quad1\le i\le d\\
\nabla u=-|\nabla u|\nu&\hbox{on }\partial\Om\\
-D^2u\,\nu\cdot\nu=f+h\cdot\nabla u&\hbox{on }\partial\Om,
\end{cases}$$
where $\nu=-\nabla u/|\nabla u|$ is the unitary outside normal to $\Om$, and $h$ is a function in $L^\infty(\partial\Om)^d$, depending only on $\Om$. For $\delta>0$ small enough, we take
$${\partial_i u\over |\nabla u|+\delta}\zeta\big(|\nabla u|+\delta\big)^{q-1}$$
as test function in the equation for $\partial_iu$. Summing with respect to the index $i$ and integrating by parts, we get
\beq\label{equ1}
\begin{split}
&\into{|D^2u|^2\over|\nabla u|+\delta}\zeta\big(|\nabla u|+\delta\big)^{q-1}dx-\into{|D^2u\nabla u|^2\over|\nabla u|(|\nabla u|+\delta)^2}\zeta\big(|\nabla u|+\delta\big)^{q-1}dx\\
&\qquad\qquad+(q-1)\int_{\{|\nabla u|+\delta<1/e\}}{|D^2u\nabla u|^2\over|\nabla u|(|\nabla u|+\delta)^2}\zeta\big(|\nabla u|+\delta\big)^qdx\\
&=\int_{\partial\Om}{|\nabla u|(f+h\cdot\nabla u)\over|\nabla u|+\delta}\zeta\big(|\nabla u|+\delta\big)^{q-1}dH_{d-1}(x)\\ &\qquad\qquad+\into{\nabla f\cdot\nabla u\over|\nabla u|+\delta}\zeta\big(|\nabla u|+\delta\big)^{q-1}dx.
\end{split}\eeq
The two first terms in the left-hand side can be written as
$$\into{\zeta\big(|\nabla u|+\delta\big)^{q-1}\over|\nabla u|+\delta}\bigg(\Big|D^2u\Big(I-{\nabla u\otimes\nabla u\over|\nabla u|^2}\Big)\Big|^2+{\delta|D^2u\nabla u|^2\over |\nabla u|^2(|\nabla u|+\delta)^2}\bigg)dx,$$
where the integrand is non-negative. Thus, we can use the monotone convergence theorem to pass to the limit as $\delta\to0$ in (\ref{equ1}) to get
\[\begin{split}
&\into{\zeta(|\nabla u|)^{q-1}\over|\nabla u|}\Big|D^2u\Big(I-{\nabla u\otimes\nabla u\over|\nabla u|^2}\Big)\Big|^2dx+(q-1)\int_{\{|\nabla u|<1/e\}} {|D^2u\nabla u|^2\over |\nabla u|^3}\zeta\big(|\nabla u|\big)^qdx\\
&=\int_{\partial\Om}(f+h\cdot\nabla u)\zeta(|\nabla u|)^{q-1}dH_{d-1}(x)+\into{\nabla f\cdot\nabla u\over|\nabla u|}\zeta(|\nabla u|)^{q-1}dx.
\end{split}\]
Using $\zeta\le1$ and that $u\in W^{2,{d/(d-1)}}(\Om)$ implies $\nabla u\in L^1(\partial\Om)^d$ we deduce (\ref{est2}). Inequality (\ref{est1}) follows letting $q\to1$ in the above equality.\par\medskip

Let us now prove (\ref{est3}), (\ref{est4}). We multiply (\ref{EcLa}) by 
$${\zeta^q(|\nabla u|+\delta)\over |\nabla u|+\delta},$$
with $\delta>0$ and then we integrate in $\{u<t\}$, for $t>0$, such that  $\{u=t\}$ is a $C^1$ manifold (this holds for ever $t$
outside a subset of $(0,\infty)$ with null measure). We get
\[\begin{split}
&\int_{\{u<t\}}{D^2u\nabla u\cdot \nabla u\over |\nabla u|\big(|\nabla u|+\delta\big)^2}\zeta^q(|\nabla u|+\delta) \Big(-1+q\zeta(|\nabla u|+\delta)1_{\{|\nabla u|+\delta<{1\over e}\}}\Big)\Big)\,dx\\
&\qquad\qquad+\int_{\partial\Om}{|\nabla u|\zeta^q(|\nabla u|+\delta)\over |\nabla u|+\delta}\,dH_{d-1}(x)\\
&=\int_{\{u=t\}}{|\nabla u|\zeta^q(|\nabla u|+\delta)\over |\nabla u|+\delta}\,dH_{d-1}(x)+\int_{\{u<t\}}f{\zeta^q(|\nabla u|+\delta)\over |\nabla u|+\delta}\,dx.
\end{split}\]
Using (\ref{positf}) in the last term and Young's inequality in the first one, this gives
\[\begin{split}
&\int_{\{u=t\}}{|\nabla u|\zeta^q(|\nabla u|+\delta)\over|\nabla u|+\delta}\,dH_{d-1}(x)+{\alpha\over 2}\int_{\{u<t\}}{\zeta^q(|\nabla u|+\delta)\over|\nabla u|+\delta}\,dx\\
&\le{1\over 2\alpha}\int_{\{u<t\}}{|D^2u\nabla u|^2\over (|\nabla u|+\delta)^3}\zeta^q(|\nabla u|+\delta)\Big(-1+q\zeta(|\nabla u|+\delta)1_{\{|\nabla u|+\delta<{1\over e}\}}\Big)^2dx\\
&\qquad\qquad+\int_{\partial\Om}{|\nabla u|\zeta^q(|\nabla u|+\delta)\over |\nabla u|+\delta}\,dH_{d-1}(x).
\end{split}\]
Thanks to (\ref{est2}) we  deduce that this inequality holds for every $t>0.$ Moreover, it allows us  to pass to the limit when  $\delta\to 0$ using  the Lebesgue's dominated convergence theorem in  right-hand side  and the monotone convergence theorem in the left-hand side. Thus, we get
\[\begin{split}
&\int_{\{u=t\}}\zeta^q(|\nabla u|)\,dH_{d-1}(x)+{\alpha\over 2}\int_{\{u<t\}}{\zeta^q(|\nabla u|)\over|\nabla u|}\,dx\\
&\le{1\over 2\alpha}\int_{\{u<t\}}{|D^2u\nabla u|^2\over|\nabla u|^3}\zeta^q(|\nabla u|)\Big(-1+q\zeta(|\nabla u|)1_{\{|\nabla u|<{1\over e}\}}\Big)^2dx\\
&\qquad\qquad+\int_{\partial\Om}\zeta^q(|\nabla u|)\,dH_{d-1}(x),
\end{split}\]
and then, by (\ref{est2}) and $0\le\zeta\le1$, that there exists $C>0$ satisfying
\beq\label{equ2ep}
\begin{split}
&\int_{\{u=t\}}\zeta^q(|\nabla u|)\,dH_{d-1}(x)+{\alpha\over2}\int_{\{u<t\}}{\zeta^q(|\nabla u|)\over|\nabla u|}\,dx\\
&\qquad\qquad\le{Cq^2\over \alpha(q-1)}\|f\|_{BV(\Om)}+H_{d-1}(\partial\Om).
\end{split}\eeq
Estimate (\ref{est3}) just follows from this inequality taking $t$ tending to infinity. \par
To get estimate (\ref{est3}) we recall the coarea formula for Lipschitz functions (\cite{EvGa}, Chapter 5) which establishes \beq\label{equ3ep}\int_{\RR}g|\nabla u|\,dx=\int_\RR\int_{\{u=t\}}g\,dH_{d-1}(x)dt,\quad \forall\, g\in L^1(\Om).\eeq\par
Using (\ref{equ3ep}) with $g=\zeta^q(|\nabla u|) 1_{\{s<u<s+\ep\}}$ we get
(\ref{est4}) from (\ref{equ2ep}).
\end{proof}

\begin{Cor} \label{Correil} For $\Om\in C^{1,1}$ and $f\in BV(\Om)$, satisfying (\ref{positf}), the function $u=\R(f)$ is such that
$$z:={1\over \log^q\big({1\over |\nabla u|}\vee e\big)}$$
belongs to $W^{1,1}(\Om)$, for every $q>0$, and there exits $C>0$ depending only on $\Om$ such that
\beq\label{estgw}\|\nabla z\|_{L^1(\Om)^N}\leq C\Big({q+1\over \alpha q}\|f\|_{BV(\Om)}+H_{d-1}(\partial\Om)\Big).\eeq
\end{Cor}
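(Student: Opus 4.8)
The plan is to deduce the $W^{1,1}$ regularity of $z=\zeta^q(|\nabla u|)$ (where $\zeta(s)=1/\log(\tfrac1s\vee e)$, so $z=\zeta(|\nabla u|)^q$) directly from the estimates already proved in Theorem \ref{TeEst}, so the work is essentially an explicit gradient computation together with a bound on each resulting term. First I would note that $z$ is a composition of the smooth truncated logarithm with $|\nabla u|$, so on the regularized approximation $u_n\in C^{2,\alpha}(\overline\Om)$ the chain rule gives $\nabla z$ pointwise; the statement then follows by passing to the limit, exactly as in the approximation argument opening the proof of Theorem \ref{TeEst}. Thus the essential content is to estimate $\|\nabla z\|_{L^1}$ for smooth $u$ by the right-hand side of \eqref{estgw}.

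Next I would carry out the chain rule. Writing $\zeta(s)=1/\log(\tfrac1s\vee e)$, on the set $\{|\nabla u|<1/e\}$ one has $\zeta(s)=1/\log(1/s)$ and $\zeta'(s)=\zeta(s)^2/s$; on $\{|\nabla u|\ge 1/e\}$ the function $z$ is constant ($\zeta\equiv1$) so its gradient vanishes there. Hence
$$\nabla z = q\,\zeta(|\nabla u|)^{q-1}\zeta'(|\nabla u|)\,\nabla|\nabla u|\,1_{\{|\nabla u|<1/e\}}
= q\,{\zeta(|\nabla u|)^{q+1}\over|\nabla u|}\,\nabla|\nabla u|\,1_{\{|\nabla u|<1/e\}}.$$
Since $\nabla|\nabla u|=D^2u\,\nabla u/|\nabla u|$ where $\nabla u\neq0$, we get the pointwise bound $|\nabla z|\le q\,\zeta^{q+1}(|\nabla u|)\,|D^2u\,\nabla u|/|\nabla u|^2$ on $\{|\nabla u|<1/e\}$.

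The key step is then to absorb this into the estimates of Theorem \ref{TeEst} by a Cauchy--Schwarz splitting: I would write the integrand as a product of $|D^2u\,\nabla u|\,\zeta^{(q+1)/2}(|\nabla u|)/|\nabla u|^{3/2}$ (whose square integrates against the left side of \eqref{est2}, with exponent $q+1$ in place of $q$) and the companion factor $\zeta^{(q+1)/2}(|\nabla u|)/|\nabla u|^{1/2}$ (whose square is controlled by \eqref{est3}, again with exponent $q+1$). Applying Cauchy--Schwarz over $\{|\nabla u|<1/e\}$ therefore yields
$$\|\nabla z\|_{L^1(\Om)^N}\le q\Big(\int_{\{|\nabla u|<1/e\}}{|D^2u\,\nabla u|^2\over|\nabla u|^3}\zeta^{q+1}(|\nabla u|)\,dx\Big)^{1/2}\Big(\into{\zeta^{q+1}(|\nabla u|)\over|\nabla u|}\,dx\Big)^{1/2},$$
and bounding the two factors by \eqref{est2} and \eqref{est3} (evaluated at $q+1>1$) produces a constant of order $q\cdot(q+1)^{1/2}/q^{1/2}\cdot(q+1)/\big(\alpha\sqrt q\,\big)^{1/2}$ times $\|f\|_{BV}^{1/2}$ on each factor, which after multiplying gives the claimed $(q+1)/(\alpha q)$-type dependence in \eqref{estgw}.

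The main obstacle is purely bookkeeping: tracking the precise powers of $q$ and of $\alpha$ through the two estimates so that the product collapses to the stated $\tfrac{q+1}{\alpha q}\|f\|_{BV}$ rather than a worse dependence, and checking that the exponent shift from $q$ to $q+1$ keeps the factor $1/(q-1)$ in \eqref{est2} harmless (here it becomes $1/q$, which is fine). I expect no analytic difficulty beyond verifying that $z$ genuinely lies in $W^{1,1}$ — namely that the weak gradient on $\{\nabla u=0\}$, a null set by \eqref{positf} together with the fact that $-\Delta u=f\ge\alpha>0$ forbids interior critical points of positive measure, contributes nothing — so that the pointwise formula for $\nabla z$ is the genuine distributional gradient and the approximation passes cleanly to the limit.
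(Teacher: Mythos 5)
Your proof follows the paper's argument exactly: the same chain-rule computation of $\nabla z$ on $\{|\nabla u|<1/e\}$, the same Cauchy--Schwarz splitting of $|\nabla z|$ into the factor $|D^2u\,\nabla u|\,\zeta^{(q+1)/2}/|\nabla u|^{3/2}$ controlled by \eqref{est2} and the factor $\zeta^{(q+1)/2}/|\nabla u|^{1/2}$ controlled by \eqref{est3}, both with exponent $q+1$, and the same reduction to smooth data. The only quibble is bookkeeping: carrying the chain-rule factor $q$ (which, as you correctly derive, belongs in front of $|\nabla z|$, though the paper's own displayed formula omits it) through the product yields a dependence of order $(q+1)/\alpha$ rather than the stated $(q+1)/(\alpha q)$, a discrepancy inherited from the paper that does not affect the substance of the corollary for fixed $q$.
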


\begin{proof}
Taking into account
\[\begin{split}
|\nabla z|&={|D^2 u\nabla u|\over |\nabla u|^2\log^{q+1}\big({1\over |\nabla u|}\vee e\big)}1_{\{|\nabla u|<1/e\}}\\
&={|D^2 u\nabla u|\over |\nabla u|^{3\over 2}\log^{q+1\over 2}\big({1\over |\nabla u|}\vee e\big)}\ {1\over |\nabla u|^{1\over 2}\log^{q+1\over 2}\big({1\over |\nabla u|}\vee e\big)}1_{\{|\nabla u|<1/e\}},
\end{split}\]
Using the Cauchy-Schwarz inequality the result follows from (\ref{est2}) and (\ref{est3}) with $q$ replaced by $q-1$.
\end{proof}

Our main result about the regularity of the function $1_{\{u>s\}}$ is given by

\begin{Thm}\label{Thapls} Assume $\Om$ of class $C^{1,1}$ and let $f\in BV(\Om)$ satisfying (\ref{positf}). Then the function $u=\R(f)$ satisfies for every $s>0$ and every $q>1$ 
\beq\label{estdfc}
{1\over\log^q\big({1\over|\nabla u|}\vee e\big)}1_{\{u>s\}}\in BV(\Om).\eeq
Moreover
\beq\label{estgu>s}{1\over \log^q\big({1\over|\nabla u|}\vee e\big)}\nabla 1_{\{u>s\}}\in {\cal M}(\Om),\eeq
and there exits $C>0$ only depending on $\Om$ such that
\beq\label{est1dlg}
\Big\|{1\over \log^q\big({1\over|\nabla u|}\vee e\big)}\nabla 1_{\{u>s\}}\Big\|_{{\cal M}(\Om)^d}\leq C{q^2\over\alpha(q-1)}\|f\|_{BV(\Om)}+H_{d-1}(\partial\Om).\eeq
\end{Thm}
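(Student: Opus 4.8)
The plan is to approximate $1_{\{u>s\}}$ by a Lipschitz function of $u$, differentiate by the product and chain rules, and pass to the limit using the uniform bound \eqref{est4} of Theorem \ref{TeEst} together with the lower semicontinuity of the total variation. First I would record that, since $-\Delta u=f\ge\alpha>0$ a.e.\ in $\Om$, one has $|\{u=s\}|=0$ for every $s>0$: on a level set of positive measure both $\nabla u$ and $\Delta u$ would vanish a.e., contradicting $\Delta u\le-\alpha<0$. This makes the ensuing convergences valid for \emph{every} $s>0$ and not merely for a.e.\ $s$.

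Set $z:={1\over\log^q\big({1\over|\nabla u|}\vee e\big)}$, so that $0\le z\le1$ and, by Corollary \ref{Correil}, $z\in W^{1,1}(\Om)\cap L^\infty(\Om)$. For $\ep>0$ introduce the Lipschitz cut-off
\beq
\theta_\ep(t)=\big(0\vee(t-s)/\ep\big)\wedge 1,\qquad t\in\RR,
\eeq
so that $0\le\theta_\ep\le1$, $\theta_\ep'=\ep^{-1}1_{(s,s+\ep)}$, and $\theta_\ep(u)\to 1_{\{u>s\}}$ pointwise. Since $u\in W^{2,d/(d-1)}(\Om)$, the chain rule gives $\theta_\ep(u)\in W^{1,1}(\Om)\cap L^\infty(\Om)$, hence $v_\ep:=z\,\theta_\ep(u)\in W^{1,1}(\Om)$ with
\beq
\nabla v_\ep=(\nabla z)\,\theta_\ep(u)+z\,\theta_\ep'(u)\,\nabla u\qquad\hbox{a.e. in }\Om.
\eeq
The first term is controlled in $L^1$ by $\|\nabla z\|_{L^1(\Om)}$, estimated in \eqref{estgw}; the second term equals $\ep^{-1}\int_{\{s<u<s+\ep\}}{|\nabla u|/\log^q({1\over|\nabla u|}\vee e)}\,dx$, which by \eqref{est4} is bounded by $C\frac{q^2}{\alpha(q-1)}\|f\|_{BV(\Om)}+H_{d-1}(\partial\Om)$ \emph{uniformly in} $\ep$. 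Thus $(\nabla v_\ep)_\ep$ is bounded in $L^1(\Om)^d$.

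Since $v_\ep\to z\,1_{\{u>s\}}$ in $L^1(\Om)$ by dominated convergence ($|v_\ep|\le z\in L^1$), the uniform gradient bound and the $L^1$-lower semicontinuity of the total variation give $z\,1_{\{u>s\}}\in BV(\Om)$, which is \eqref{estdfc}. To reach \eqref{estgu>s} and \eqref{est1dlg} I would then identify the limit measure: as $(\nabla z)\theta_\ep(u)\to(\nabla z)1_{\{u>s\}}$ in $L^1(\Om)$, the remaining piece $z\,\theta_\ep'(u)\nabla u=\nabla v_\ep-(\nabla z)\theta_\ep(u)$ converges in the sense of distributions to $D\big(z1_{\{u>s\}}\big)-(\nabla z)1_{\{u>s\}}$, a difference of a measure and an $L^1$ function, which is by definition the measure $z\,\nabla 1_{\{u>s\}}$; this proves \eqref{estgu>s}. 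Finally, weak-$*$ lower semicontinuity of the total variation and \eqref{est4} yield $\big\|z\,\nabla 1_{\{u>s\}}\big\|_{{\cal M}(\Om)^d}\le\liminf_\ep\|z\,\theta_\ep'(u)\nabla u\|_{L^1(\Om)}\le C\frac{q^2}{\alpha(q-1)}\|f\|_{BV(\Om)}+H_{d-1}(\partial\Om)$, i.e.\ \eqref{est1dlg}.

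The analytic heart — differentiating the equation in $W^{2,d/(d-1)}$ and the weighted Hardy-type estimate \eqref{est4} — is already done in Theorem \ref{TeEst}, so the only genuine point requiring care here is conceptual rather than computational: because $1_{\{u>s\}}$ itself need not be a $BV$ function, the object $z\,\nabla 1_{\{u>s\}}$ must be \emph{defined} as $D(z1_{\{u>s\}})-(\nabla z)1_{\{u>s\}}$ and shown to be a finite measure, which is exactly what the splitting of $\nabla v_\ep$ and the separate treatment of its two pieces accomplish. A secondary point to check is that no boundary mass is lost: since $u=0<s$ on $\partial\Om$, for small $\ep$ the cut-off $\theta_\ep(u)$ vanishes near $\partial\Om$, so $\{u>s\}$ is compactly contained in $\Om$ and the limit measure is genuinely interior, consistent with computing the total variation over $\Om$.
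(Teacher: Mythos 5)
Your proof is correct and follows essentially the same route as the paper: the same cut-off $v_\ep=z\,T_\ep(u-s)^+/\ep$, the same splitting of $\nabla v_\ep$ into the piece controlled by \eqref{estgw} and the piece controlled uniformly in $\ep$ by \eqref{est4}, and the same passage to the limit identifying $z\,\nabla 1_{\{u>s\}}$ as $D\big(z1_{\{u>s\}}\big)-(\nabla z)1_{\{u>s\}}$ with lower semicontinuity of the total variation. Your added observations (that $|\{u=s\}|=0$ since $f\ge\alpha>0$, and that the distributional definition of $z\,\nabla 1_{\{u>s\}}$ is the conceptual point) are correct refinements of what the paper leaves implicit.
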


\begin{proof}
We fix $s>0$ and $q>1$, then, for $\ep>0$, we define
$$v_\ep :={T_\ep(u-s)^+\over \ep\log^q\big({1\over |\nabla u|}\vee e \big)}.$$
By the Lebesgue dominated convergence theorem, we have
$$v_\ep\rightarrow {1\over \log^q\big({1\over|\nabla u|}\vee e\big)}1_{\{u>s\}}\ \hbox{ in }L^p(\Om),\ \forall\, p\in [1,\infty).$$
Moreover,
\beq\label{convgve}\nabla v_\ep={\nabla u\over \ep}1_{\{s<|u|<s+\ep\}}{1\over \log^q\big({1\over|\nabla u|}\vee e\big)}+{T_\ep(u-s)^+\over\ep}\nabla \Big({1\over \log^q\big({1\over|\nabla u|}\vee e\big)}\Big),\eeq
where the right-hand side is bounded in $L^1(\Om)^N$ by (\ref{est4}) and (\ref{estgw}). This proves (\ref{estdfc}).

Assertion (\ref{estgu>s}) also comes from (\ref{convgve}), which gives
$$\ba{l}\dis{\nabla u\over \ep}1_{\{s<|u|<s+\ep\}}{1\over \log^q\big({1\over|\nabla u|}\vee e\big)}=\nabla v_\ep-{T_\ep(u-s)^+\over\ep}\nabla \Big({1\over \log^q\big({1\over|\nabla u|}\vee e\big)}\Big)\\ \ecart\dis \stackrel{*}\rightharpoonup
\nabla \Big({1\over \log^q\big({1\over|\nabla u|}\vee e\big)} 1_{\{u>s\}}\Big)-1_{\{u>s\}}\nabla \Big({1\over \log^q\big({1\over|\nabla u|}\vee e\big)}\big)\\ \ecart\dis={1\over \log^q\big({1\over|\nabla u|}\vee e\big)}\nabla 1_{\{u>s\}}\ \hbox{ in }{\cal M}(\Om)^N,\ea$$
taking into account that the left-hand side is bounded in $L^1(\Om)$ by (\ref{est4}). Inequality (\ref{est1dlg}) is also a consequence of the estimate of the left-hand side by (\ref{est4}).
\end{proof}

\begin{Rem} \label{obmn0} As we said at the beginning of subsection \ref{ss:weaker}, assumption (\ref{positf}) implies that the set $\{\nabla \R(f)=0\}$ has null measure. A further result is given by (\ref{est3}) which proves that $\big(|\nabla u|\log^q(1/|\nabla u|)\big)^{-1}$ is integrable for $q>1$ and $\nabla u$ close to zero. Observe that this result does not extend to $q=1$. For example, taking $f=1$ and $\Om$ the annulus $B(0,2)\setminus \overline B(0,1)$ we have
$$\nabla u=\begin{cases}
\dis{1\over 2}\Big(-|x|+{3\over 2\log 2\,|x|}\Big){x\over |x|}&\text{if }d=2\\ \noalign{\smallskip}
\dis{1\over d}\Big(-|x|+{3(d-2)2^{d-2}\over 2(2^{d-2}-1)|x|^{d-1}}\Big){x\over |x|}&\text{if }d>2.
\end{cases}$$
Thus, using that $\nabla u$ vanishes on $\{|x|=r\}$ for some $r\in (1,2)$, we easily get
$$\int_\Om{1\over |\nabla u|\log^q({1\over |\nabla u|}\vee e)}dx<\infty \iff q>1.$$
\end{Rem}

Estimate (\ref{est3}) allows us to prove that for every $f\in W^{1,p}(\Om)$, $p>d$, which satisfies (\ref{positf}), the Hausdorff dimension of the set $\{\nabla \R(f)=0\}$ is at most $d-1$. However, we are not able to prove $H_{d-1}(\{\nabla u=0\}) <\infty$ as in the example in Remark \ref{obmn0}. In order to give a more accurate result, we introduce the following refinement of the usual $H_{d-1}$-measure.

\begin{Def} For $q\geq 0$ and $A\subset\RR^d$, we define
$$H^\delta_{d-1,q}(A)=\inf\left\{\sum_{i=1}^n{r_i^{d-1}\over\log^q\big({1\over r_i}\big)}:\quad A\subset \bigcup_{i=1}^n B(x_i,r_i),\ r_i<\delta\right\},\quad 0<\delta<1,$$
and
$$H_{d-1,q}(A)=\lim_{\delta\to 0}H^\delta_{d-1,q}(A)=\sup_{\delta>0}H^\delta_{d-1,q}(A).$$
\end{Def}

\begin{Rem} Clearly, $H_{d-1,q}$ is an outer measure. It agrees with the usual $(d-1)$-Hausdorff measure for $q=0$ and satisfies
$$H_{d-1,q}(A)=0\ \hbox{ for some }q\ge0\Longrightarrow H_s(A)=0,\ \forall \,s>d-1,$$
with $H_s$ the $s$-Hausdorff measure. Thus, every set $A$ with $H_{d-1,q}(A)=0$ for some $q\geq 0$ has Hausdorff dimension at most $d-1$.
\end{Rem}

\begin{Thm} \label{teoec} Assume $\Om\in C^{1,1}$ and $f\in W^{1,p}(\Om)$, with $p>d$ such that (\ref{positf}) is satisfied. Then, for every $q>1$ the solution $u$ of (\ref{EcLa}) satisfies
\beq\label{estcap} H_{d-1,q}\big(\{\nabla u=0\})=0.\eeq
\end{Thm}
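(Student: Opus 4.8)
The plan is to combine the interior regularity of $u$ with the integrability estimate (\ref{est3}), applied at an exponent \emph{strictly below} $q$, in order to control the Lebesgue measure of thin neighbourhoods of the critical set $Z:=\{\nabla u=0\}$, and then to convert this measure bound into the vanishing of $H_{d-1,q}(Z)$ by a packing (Vitali) argument. First I would fix the geometry. Since $p>d$ and $\Om$ is bounded we have $W^{1,p}(\Om)\subset BV(\Om)$, so (\ref{est3}) of Theorem \ref{TeEst} is available; moreover interior elliptic regularity gives $u\in W^{3,p}_{loc}(\Om)\subset C^2_{loc}(\Om)$, so $D^2u$ is locally bounded, while global $W^{2,p}$ estimates on the $C^{1,1}$ domain give $u\in C^1(\overline\Om)$. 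Because $f\ge\alpha>0$ and $u=0$ on $\partial\Om$, the strong maximum principle yields $u>0$ in $\Om$, and Hopf's lemma gives $\partial_\nu u<0$, hence $\nabla u\neq0$ on $\partial\Om$. Thus $Z$ is a compact subset of $\Om$, and on a fixed neighbourhood $U\Subset\Om$ of $Z$ one has $|D^2u|\le M$ for some $M>0$. Integrating $D^2u$ from $x$ to a nearest point of $Z$ gives the Lipschitz bound $|\nabla u(x)|\le M\,\mathrm{dist}(x,Z)$ near $Z$, so the tube $Z_\rho:=\{\mathrm{dist}(\cdot,Z)<\rho\}$ satisfies $Z_\rho\subset\{|\nabla u|\le M\rho\}$ for all small $\rho$.

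Next I would estimate $|Z_\rho|$. Fix $q_0\in(1,q)$ and apply (\ref{est3}) with $q_0$ in place of $q$, obtaining a finite constant $C_0$ with
\[
\into\frac{dx}{|\nabla u|\,\log^{q_0}\!\big(\tfrac{1}{|\nabla u|}\vee e\big)}\le C_0 .
\]
The weight $t\mapsto\big(t\,\log^{q_0}(1/t)\big)^{-1}$ is decreasing on some interval $(0,t_1)$, so for $M\rho<t_1$ every point of $Z_\rho$ with $\nabla u\neq0$ (recall $|Z|=0$ by Remark \ref{obmn0}) contributes at least $\big(M\rho\,\log^{q_0}(1/(M\rho))\big)^{-1}$ to the integrand. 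Hence
\[
|Z_\rho|\le C_0\,M\rho\,\log^{q_0}\!\big(1/(M\rho)\big),\qquad\text{for all small }\rho>0 .
\]

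Finally I would run the covering argument. Let $\{x_i\}_{i=1}^{N}\subset Z$ be a maximal $\rho$-separated family; the balls $B(x_i,\rho/2)$ have disjoint interiors and lie in $Z_\rho$, so $N\,\omega_d(\rho/2)^d\le|Z_\rho|$, whence $N\le C\,\rho^{1-d}\log^{q_0}(1/(M\rho))$, while by maximality $Z\subset\bigcup_iB(x_i,\rho)$. Using this covering, for every $\delta>\rho$,
\[
H^\delta_{d-1,q}(Z)\le N\,\frac{\rho^{d-1}}{\log^q(1/\rho)}\le C\,\frac{\log^{q_0}(1/(M\rho))}{\log^q(1/\rho)} .
\]
Since $q_0<q$, the right-hand side behaves like $\log^{q_0-q}(1/\rho)\to0$ as $\rho\to0$; letting $\rho\to0$ with $\delta$ fixed gives $H^\delta_{d-1,q}(Z)=0$ for every $\delta>0$, hence $H_{d-1,q}(Z)=0$, which is (\ref{estcap}).

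The only genuinely delicate point is the gap between the two exponents: applying the integrability estimate at the \emph{same} exponent $q$ would make the logarithmic factors cancel and yield merely $H_{d-1,q}(Z)<\infty$, so the strict inequality $q_0<q$ is exactly what produces the extra vanishing factor. The remaining ingredients—the monotonicity of the weight near $0$, the Hopf-lemma localization guaranteeing $Z\Subset\Om$ (and thus the Lipschitz control of $\nabla u$), and the standard packing inequality—are routine and should not present obstacles.
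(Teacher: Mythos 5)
Your argument is correct, but it takes a genuinely different route from the paper's. The paper works at the \emph{same} exponent $q$: since $\into\big(|\nabla u|\log^q(\tfrac{1}{|\nabla u|}\vee e)\big)^{-1}dx<\infty$ by \eqref{est3} and $|\{\nabla u=0\}|=0$, absolute continuity of the integral provides, for each $\ep>0$, an open neighbourhood $U$ of the critical set on which this integral is below $\ep$; the set is then covered by small balls centred at critical points and contained in $U$, a disjoint subfamily is extracted with Vitali's $5r$-covering lemma, and the Lipschitz bound $|\nabla u(x)|\le L|x-x_i|$ together with an integration by parts shows that each quantity $r_i^{d-1}/\log^q(1/r_i)$ is controlled by a constant times the integral over the corresponding ball, so the whole sum is $O(\ep)$ and the vanishing of $H_{d-1,q}$ follows from the arbitrariness of $\ep$. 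You instead invoke \eqref{est3} at an auxiliary exponent $q_0\in(1,q)$, convert it (via the same Lipschitz bound and the monotonicity of $t\mapsto t\log^{q_0}(1/t)$ near $0$) into the tube estimate $|Z_\rho|\le C\rho\log^{q_0}(1/(M\rho))$, and conclude by a packing argument with balls of a single radius $\rho$, the vanishing coming from the gap $\log^{q_0-q}(1/\rho)\to0$. Both proofs rest on the same two ingredients --- estimate \eqref{est3} and the Lipschitz control of $\nabla u$ near its compact zero set --- but they extract the conclusion differently: your version yields the slightly stronger statement that equal-radius coverings of $\{\nabla u=0\}$ by $N$ balls of radius $\rho$ satisfy $N\rho^{d-1}/\log^q(1/\rho)\to0$ (a Minkowski-content-type bound), and it makes explicit, via the strong maximum principle and Hopf's lemma, the localization $\{\nabla u=0\}\Subset\Om$ that the paper's covering step uses tacitly; the paper's version avoids the auxiliary exponent and the exponent gap altogether, relying instead on the absolute continuity of a finite integral over a null set. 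Your one flagged ``delicate point'' is handled correctly: the strict inequality $q_0<q$ is indeed what turns finiteness into vanishing in your scheme, whereas the paper obtains vanishing at the exact exponent $q$ by the $\ep$-argument.
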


\begin{proof} We take $A:=\{\nabla u=0\}$. 
By (\ref{est3}) and $|A|=0$, for every $\ep>0$ there exists an open set $U\subset\Om$ with $A\subset U$ and
$$\int_U{1\over |\nabla u|\log^q\big({1\over |\nabla u|}\vee e\big)}dx<\ep.$$\par
Let $\delta\in (0,1)$ be. Using that $A$ is compact, we can find $x_i\in A$ and $0<r_i<\delta$, $1\leq i\leq m$, such that
\beq\label{esc1}A\subset\bigcup_{i=1}^m B(x_i,r_i),\qquad\overline{B}(x_i,r_i)\subset U,\ 1\leq i\leq m.\eeq
By the Vitali's covering theorem we can now extract $n$ balls $B(x_{i_j},r_{i_j})$, $1\leq j\le n$, which are disjoint and satisfy
\beq\label{esc2}\bigcup_{i=1}^m \overline{B}(x_i,r_i)\subset\bigcup_{j=1}^n\overline{B}(x_{i_j},5r_{i_j}).\eeq
On the other hand, since $f\in W^{1,p}(\Om)$, $p>d$ implies that $\nabla u$ is Lipschitz and $\nabla u(x_{i_j})=0$, there exists $L>0$ such that
$$|\nabla u(x)|\le L|x-x_{i_j}|,\qquad\forall x\in\Om,\ 1\le j\le n.$$
Then, for a certain constant $c>0$, we have
$$\int_{B(x_{i_j},r_{i_j})}{1\over |\nabla u|\log^q\big({1\over|\nabla u|}\vee e\big)}\,dx\ge c\int_0^{r_{i_j}}{r^{d-2}\over\log^q(1/r)}\,dr,$$
where an integration by parts gives
$$\int_0^{r_{i_j}}{r^{d-2}\over\log^q(1/r)}\,dr
={r_{i_j}^{d-1}\over (d-1)\log^q(1/r_{i_j})}+
{1\over d-1}\int_0^{r_{i_j}}{r^{d-2}\over \log^{q+1}(1/r)}\,dr,$$
and then, assuming $\delta$ small enough and recalling that $r_{i_j}<\delta$, we get
$$\int_0^{r_{i_j}}{r^{d-2}\over \log^q(1/r)}\,dr\ge{r_{i_j}^{d-1}\over 2(d-1)\log^q(1/r_{i_j})}.$$
Using that 
$$c\sum_{j=1}^n\int_0^{r_{i_j}}{r^{d-2}\over\log^q(1/r)}dr\le\int_U{1\over |\nabla u|\log^q\big({1\over |\nabla u|}\vee e\big)}dx<\ep,$$
we deduce
$$\sum_{j=1}^n{r_{i_j}^{d-1}\over\log^q(1/r_{i_j})}\le{2(d-1)\over c}\,\ep.$$
By (\ref{esc1}) and (\ref{esc2}) we then have
$$H_{d-1,q}^\delta(A)\leq \sum_{j=1}^n{(5r_{i_j})^{d-1}\over\log^q(1/(5r_{i_j}))}\le{5^{d-1}2(d-1)\over c}\,\ep,$$
which by the arbitrariness of $\ep$ proves (\ref{estcap}).
\end{proof}

\subsection{The case $\Om$ convex}\label{ss:convex}

When the domain $\Om$ is convex, in some cases we can obtain a better regularity for the optimal right-hand side $f_{opt}$. Let us return to the compliance case
$$\min\bigg\{\int_\Om f\,\R(f)\,dx\ :\ \int_\Om f\,dx\ge m,\ 0\le f\le1\bigg\}$$
with $0<m<|\Om|$, and assume $\Om$ convex. We have seen in Example \ref{compliance} that the optimal right-hand side $f_{opt}$ is of bang-bang type: $f_{opt}=1_E$ with $E=\{w<s\}$ for a suitable $s$ such that $|E|=m$, where $w$ is the solution of the PDE
\beq\label{eqEconvex}
\begin{cases}
-\Delta w=1_{\{w<s\}}&\text{in }\Om\\
w=0&\text{on }\partial\Om.
\end{cases}\eeq

\begin{Lem}\label{Econvex}
The set $E=\{w<s\}$ above is convex.
\end{Lem}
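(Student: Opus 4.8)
The plan is to prove convexity of $E=\{w<s\}$ by exploiting the concavity of an appropriate power of the solution $w$ to the semilinear equation \eqref{eqEconvex}. The natural strategy rests on the following observation: the set $\{w<s\}$ is a sublevel set of $w$, so it is convex provided $w$ is a \emph{convex} function, or more generally provided some increasing rearrangement of $w$ is convex. Since $-\Delta w=1_{\{w<s\}}\ge0$ means $w$ is superharmonic (hence not itself convex), one cannot hope for convexity of $w$ directly. Instead I would aim to show that $w$ is \emph{quasi-concave}, i.e.\ that its \emph{super}level sets $\{w>t\}$ are convex for all $t$; equivalently, that $\{w<s\}=\Om\setminus\{w\ge s\}$ is the complement within the convex $\Om$ of a convex set, which combined with the boundary behaviour $w=0<s$ on $\partial\Om$ would force $\{w<s\}$ to be a convex collar—this needs care, so I would rather aim directly at convexity of the sublevel set via a power-concavity argument.

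First I would reformulate the problem so that the quantity whose concavity I want to exploit is naturally increasing and vanishing on the right part of the domain. The cleanest route is to introduce $v:=s-w$, which solves $-\Delta v = -1_{\{v>0\}}$ with $v=s>0$ on $\partial\Om$, so that $E=\{v>0\}$. On the set $\{v>0\}$ the equation reads $-\Delta v=-1$, i.e.\ $\Delta v=1$, so $v$ is \emph{subharmonic} there. The key classical fact I would invoke is the theory of \textbf{power-concavity} (Korevaar, Kennington, Kawohl, Lewis): solutions of $\Delta v = g(v)$ or of torsion-type problems on convex domains inherit convexity/concavity properties of their level sets. Concretely, on the free-boundary-type problem, the function $v$ restricted to $E$ solves an overdetermined torsion problem with $v=0$ and $|\nabla v|$ continuous across the free boundary $\{v=0\}$; the standard result (via the concavity function / constant-rank theorem, or via the convex-envelope argument of Kawohl) yields that $\sqrt{v}$ is concave on $E$, whence $E=\{\sqrt v>0\}$ is convex.

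In the order I would carry this out: (i) record that by Lemma hypotheses $w\in C^{1,1}(\overline\Om)$ (from $f_{opt}\in L^\infty$ and $\Om\in C^{1,1}$, via Theorem~\ref{TeEst}-type elliptic regularity), so $v$ and its level sets are well defined and the free boundary $\partial E\cap\Om$ is a level set of a $C^{1,1}$ function; (ii) verify that on $E$ the function $v$ solves $\Delta v=1$ with $v=0$ on the relative boundary, i.e.\ $v$ is (twice) the torsion function of $E$; (iii) apply the concavity principle for the torsion function on a convex body—but here the subtlety is that $E$ is exactly what we are trying to prove convex, so this is circular. To break the circularity I would instead run the \textbf{convex-rearrangement / moving-plane-free} argument of Korevaar's concavity maximum principle \emph{directly on $\Om$}: define the concavity function $c(x,y)=\sqrt{v^+}\big(\tfrac{x+y}{2}\big)-\tfrac12\big(\sqrt{v^+(x)}+\sqrt{v^+(y)}\big)$ on $\overline\Om\times\overline\Om$, show $c\le0$ by ruling out an interior negative minimum using the differential inequality satisfied by $\sqrt{v^+}$ together with the boundary condition $v=s>0$ on $\partial\Om$ (so $\sqrt{v^+}$ is strictly positive and smooth near $\partial\Om$), and conclude $\sqrt{v^+}$ is concave, hence $E=\{\sqrt{v^+}>0\}$ convex.

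The main obstacle, and the step I expect to demand the most care, is precisely the non-smoothness of the right-hand side $1_{\{w<s\}}$ across the free boundary $\{w=s\}=\partial E\cap\Om$: the concavity maximum principle in its textbook form requires a smooth, monotone source $g(v)$, whereas here the source is discontinuous. I would handle this either by a regularization $1_{\{w<s\}}\approx \chi_\eta(s-w)$ with $\chi_\eta$ smooth nondecreasing, proving convexity of the approximate superlevel sets and passing to the limit using the $C^{1,1}$ bound and uniqueness of the limit problem, or by appealing to the viscosity-solution version of Korevaar's principle which tolerates monotone discontinuous sources. Establishing that $\sqrt{v^+}$ (and not merely $v$) is the correct power—and that the concavity function has no interior minimum given the discontinuity—is the crux; the boundary datum $s>0$ is what makes the argument work, since it prevents $\sqrt{v^+}$ from degenerating on $\partial\Om$ and localizes any potential failure of concavity to the interior, where the maximum principle applies.
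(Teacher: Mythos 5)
Your argument breaks at the step where you identify $v=s-w$ on $E$ with ``(twice) the torsion function of $E$''. From \eqref{eqEconvex} one gets $\Delta v=+1_{\{v>0\}}$, so $v$ is \emph{subharmonic} on $E$, whereas the torsion function is superharmonic ($-\Delta=1$); moreover $v=s>0$ on $\partial\Om$, which is part of $\partial E$, so $v$ does not vanish on the boundary of $E$. The Makar-Limanov/Korevaar/Kennington power-concavity theory you invoke requires $-\Delta v=g(v)\ge 0$ with $v=0$ on the boundary of a convex set, and neither a regularization of $1_{\{w<s\}}$ nor a viscosity version of the concavity maximum principle can rescue the argument, because the conclusion you are aiming at is unattainable: $v^+=(s-w)^+$ is positive on a full collar of $\partial\Om$ (where $w=0<s$) and vanishes on the core $\{w\ge s\}$, which has measure $|\Om|-m>0$ and lies in the interior of $\Om$. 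A nonnegative concave function on a convex domain cannot vanish at an interior point while being positive near every boundary point (restrict it to a chord through that point), so $\sqrt{v^+}$ is never concave, and your concavity function $c$ genuinely has a negative interior minimum that no maximum principle will exclude.

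This is not a repairable technical gap, because read literally, with $E=\{w<s\}$, the statement is false: since $w=0<s$ on $\partial\Om$ and $|\{w\ge s\}|=|\Om|-m>0$, the set $E$ always contains a neighbourhood of $\partial\Om$ and omits a nonempty interior core, hence is never convex; for $\Om$ a ball the (unique, radial) optimal configuration makes $E$ an annulus. What is true, and what the paper's one-line proof actually delivers, is convexity of the \emph{superlevel} sets: applying Theorem 1.2 of \cite{BMS} to $w$ itself---after approximating the discontinuous nonlinearity $\phi=1_{[0,s]}$ by the H\"older functions $\phi_n(t)=1-(t/s)^n$, which satisfy the concavity hypotheses (i)--(ii)---one obtains that $w$ is quasi-concave, so $\{w>t\}$ is convex for every $t$. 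The convex set is therefore the core $\{w>s\}$, where $f_{opt}=0$, not the collar $\{w<s\}$ where $f_{opt}=1$; the inequality sign in the statement is a slip of the paper (the proof of the Proposition that follows, which uses $-\Delta w=1$ in $\Om\setminus E$ and $w=s$ on $\partial E$, confirms that $E$ is meant to be the core). Notably, your opening paragraph pointed at exactly the right strategy---quasi-concavity of $w$ and convexity of its superlevel sets---before you pivoted to sublevel sets via $v=s-w$; that pivot, forced by taking the misprinted statement at face value, is what makes the proof unsalvageable.
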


\begin{proof}
It is enough to apply Theorem 1.2 of \cite{BMS}. In fact this theorem applies to solutions $v$ of
$$\begin{cases}
-\Delta v=\phi(v)&\text{in }\Om\\
v=0&\text{on }\partial\Om
\end{cases}$$
with $\phi$ H\"older continuous such that
\begin{itemize}
\item[(i)]$\sqrt\Phi$ is concave,
\item[(ii)]$\Phi/\phi$ is convex on $]0,M[$,
\end{itemize}
where $\Phi$ is the primitive of $\phi$ with $\Phi(0)=0$, and
$$M=\inf\big\{t>0\ :\ \phi(t)=0\big\}.$$
By approximating our function $\phi=1_{[0,s]}$ by
$$\phi_n(t)=\begin{cases}
1-(t/s)^n&\text{if }t\le s\\
0&\text{if }t>s
\end{cases}$$
we see that $\phi_n$ satisfies conditions (i) and (ii), hence the level sets of the functions $v_n$ solutions of the PDE
$$\begin{cases}
-\Delta v=\phi_n(v)&\text{in }\Om\\
v=0&\text{on }\partial\Om
\end{cases}$$
are convex. Passing to the limit as $n\to\infty$, we have that the level sets of the solution $v$ are convex too.
\end{proof}

\begin{Pro}
The set $E$ is of class $C^1$.
\end{Pro}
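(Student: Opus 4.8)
The plan is to reinterpret \eqref{eqEconvex} as a normalized obstacle problem and then to promote the Lipschitz regularity carried by convexity to $C^1$ by excluding singular free boundary points. First I would set $v:=s-w$. One checks that $w\le s$ in $\Om$: on the open set $\{w>s\}$ the right-hand side of \eqref{eqEconvex} vanishes, so $w$ is harmonic there, and since $w=0<s$ on $\partial\Om$ this set is compactly contained in $\Om$ with boundary values $s$ on $\{w=s\}$; the maximum principle then forces $w\equiv s$, i.e. $\{w>s\}=\emptyset$. Hence $v\ge0$ and $\Delta v=-\Delta w=1_{\{w<s\}}=1_{\{v>0\}}$, so $v$ is a nonnegative solution of the classical obstacle problem $\Delta v=1_{\{v>0\}}$, with coincidence set $\Lambda:=\{v=0\}=\{w\ge s\}=\Om\setminus E$ and free boundary $\Gamma:=\partial\Lambda\cap\Om=\{w=s\}\cap\Om$. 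Since $w=0<s$ on $\partial\Om$, $\Gamma\Subset\Om$, so only interior regularity is at stake.

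Next I would invoke the Caffarelli regularity theory for the obstacle problem with (constant, hence H\"older) right-hand side. This yields $v\in C^{1,1}_{loc}(\Om)$ with $\nabla v=0$ on $\Lambda$, and it classifies the blow-ups $v(x_0+r\,\cdot)/r^2$ at each $x_0\in\Gamma$: along subsequences they converge either to a half-space solution $\frac12((x\cdot e)^+)^2$, near which $\Gamma$ is a $C^{1,\alpha}$ hypersurface (regular point), or to a polynomial solution $\frac12\langle x,Ax\rangle$ with $A\ge0$ and $\tr A=1$ (singular point). The decisive feature is that at a singular point the coincidence set has Lebesgue density $0$, since its blow-up is contained in $\ker A$, a subspace of dimension at most $d-1$, whereas at a regular point the density of $\Lambda$ equals $1/2$.

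The key step is then to rule out singular points by convexity. By Lemma~\ref{Econvex} (more precisely, by the convexity of the super-level sets of $w$ established in its proof) the coincidence set $\Lambda=\{w\ge s\}=\Om\setminus E$ is convex, and it has nonempty interior because $|\Lambda|=|\Om|-m>0$. A convex body has strictly positive lower Lebesgue density at every boundary point: fixing $y_0\in\mathrm{int}(\Lambda)$ and $\rho>0$ with $B_\rho(y_0)\subset\Lambda$, convexity forces the solid cone spanned by $x_0$ and $B_\rho(y_0)$ to lie in $\Lambda$, whence $\liminf_{r\to0}|\Lambda\cap B_r(x_0)|/|B_r|>0$ for every $x_0\in\Gamma$. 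Consequently no free boundary point can be singular, every point of $\Gamma$ is regular, and $\Gamma$ is a $C^{1,\alpha}$ manifold; combined with $\partial\Om\in C^{1,1}$ this shows that $\partial E$ is of class $C^1$.

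The main obstacle is the clean importation of the obstacle-problem machinery — the $C^{1,1}$ estimate, the blow-up dichotomy, and especially the identification of singular points as precisely those of zero coincidence-set density; once these are granted, the convexity–density argument is elementary. A minor but necessary bookkeeping point is to match the convex set furnished by Lemma~\ref{Econvex} with the coincidence set $\Lambda=\Om\setminus E$ rather than with $E$ itself, and to record $\Gamma\Subset\Om$ so that behaviour near $\partial\Om$ never interferes.
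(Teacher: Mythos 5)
Your proof is correct, but it takes a genuinely different route from the paper's. The paper argues by contradiction on a corner: if the convex set bounded by $\{w=s\}$ had a corner, then $w$ would solve $-\Delta w=1$ with constant Dirichlet data in the complementary region, which has a re-entrant corner there; since the global right-hand side $1_{\{w<s\}}$ is bounded, Calder\'on--Zygmund gives $w\in W^{2,p}_{loc}$ for every $p$ near that point, contradicting the classical corner-singularity theory for re-entrant angles. You instead recast \eqref{eqEconvex} as the obstacle problem for $v=s-w$ (your preliminary maximum-principle step showing $w\le s$ is correct and necessary) and invoke Caffarelli's regular/singular dichotomy, using convexity of the coincidence set to force a positive lower Lebesgue density at every free boundary point and hence to exclude singular points. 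The two uses of Lemma~\ref{Econvex} are therefore different: the paper needs convexity to make the hypothetical corner re-entrant, you need it for the density bound. Your route imports heavier machinery but arguably delivers more: it yields local $C^{1,\alpha}$ regularity of the free boundary rather than bare $C^1$, and it sidesteps the delicate point, left implicit in the paper, that the singular corner coefficient does not vanish for the particular data at hand. Two bookkeeping remarks: you are right that the convex set furnished by Lemma~\ref{Econvex} must be read as the superlevel set $\{w\ge s\}=\Lambda$ (the paper's labelling of $E$ is internally inconsistent between the lemma, the proposition and the displayed PDE, and your identification is the geometrically coherent one); on the other hand your closing appeal to $\partial\Om\in C^{1,1}$ is not among the hypotheses of this subsection, which only assumes $\Om$ convex --- but this is harmless since the assertion concerns the free boundary $\{w=s\}$, which you correctly note is compactly contained in $\Om$.
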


\begin{proof}
By Lemma \ref{Econvex} the set $E$ is convex; assume by contradiction that it has a corner. The solution $w$ of \eqref{eqEconvex} satisfies the PDE
$$\begin{cases}
-\Delta w=1\text{ in }\Om\setminus E\\
w=0\text{ on }\partial\Om,\quad w=s\text{ on }\partial E;
\end{cases}$$
in addition, by \eqref{eqEconvex} we have that $w$ is $W^{2,p}$ regular near the corner for every $p$, which is impossible by the well-known theory of elliptic PDEs in domains with re-entrant corners.
\end{proof}

\section{Numerical simulations}\label{snum}

In this section we show some numerical examples, in the two-dimensional case, for problem (\ref{optpb}). We consider three cases:
\begin{itemize}
\item[-] Problem \eqref{maxlp} relative to the
maximization of the $L^p$ norm of $\R(f)$ when $f$ is non-negative and has a bounded mass;
\item[-]The minimization problem (\ref{pbme3}) in Example \ref{exbang} in the case of a linear cost $j(x,s)=g(x)\,s$ for some suitable function $g$;
\item[-]The minimization problem (\ref{pbme3}) in Example \ref{exbang} in the quadratic case $j(x,s)=|s-u_0(x)|^2$ for some suitable function $u_0$.
\end{itemize} 

We apply a gradient descent method derived from an appropriate use of the optimality conditions given by Theorem \ref{Thcop}. 
We refer to \cite{All,BeSig,Cas} for other algorithms related to similar problems. The algorithm is as follows.

\begin{itemize}
\item Initialization: choose an admisible function $f_0\in L^1(\Om)$.
\item For $n\ge0$, iterate until stop condition as follows.
\begin{itemize}
\item Compute  $w_n$ as in (\ref{defw}) for $f_{opt}=f_n$.
\item Compute $\hat f_n$ descent direction associated as:
\begin{itemize}
\item Example 4.5 
$$\hat f_n(x)= m \delta_{x_n}
$$
with $x_n$ the point where the minimum of $w_n$ is attained.
\item Example 4.6
$$\hat f_n(x)=\left\{\ba{ll}
\beta &\hbox{ if }  w_n(x)<-\lambda_n,\\ \alpha &\hbox{ in other case,}
\ea\right.$$
where $\lambda_n$ is the Lagrange multiplier associated to the volume constraint.
\end{itemize}
\item For $\ep_n\in[0,1)$ small enough, update the function $f_n$:
$$f_{n+1}=f_n+\epsilon_n (\hat f_n-f_n).$$

\end{itemize}
\item Stop if $\frac{|I_n-I_{n-1}|}{|I_0|}<tol$, for $tol>0$ small, with
$$I_n=\into\big(j(x,\R(f_n))+\psi(f_n)\big)\,dx,\quad n\geq 0.$$
\end{itemize}

The computation has been carried out using the free software FreeFem++ v4.5 (\cite{Hec}, available in {\tt http://www.freefem.org}). The picture of figures are made in Paraview 5.10.1 (available at {\tt https://www.kitware.com/open-source/\# paraview}), which is free too, except  Figure \ref{Fig:F1} which is made with MATLAB. We use P1-Lagrange finite element approximations for the control function  $f$, the state  $\R(f)$ and costate $w$. For all simulations of the Example 4.6 where the parameters $\alpha$ and $\beta$ appear, we consider the normalized values $\alpha=0$ and $\beta=1$.

\begin{Exa}
We consider the maximization problem
$$\max\bigg\{\int_\Om|\R(f)|^p\,dx\ :\ f\geq 0,\quad \into f\,dx\le m\bigg\}$$
in dimension two, with $p=4$ and volume constraint $m=10$. The domain $\Om$ is a ball with a non-centered hole and a sharp mesh with 87806 triangles, see Figure \ref{Fig:mesh}. According to  the analysis of optimality condition made in Example \ref{ex45}, the optimal right-hand side $f_{opt}=m\delta_{x_0}$ is a Dirac mass where the point $x_0$ is explicitly computed by  $(-0.429729, 0.212863)$, see Figure \ref{Fig:F1}. In Figure \ref{Fig:cost} we can observe the decreasing cost evolution for the minimization algorithm.

\begin{figure}[h!]\centering
\includegraphics[width=0.8\textwidth]{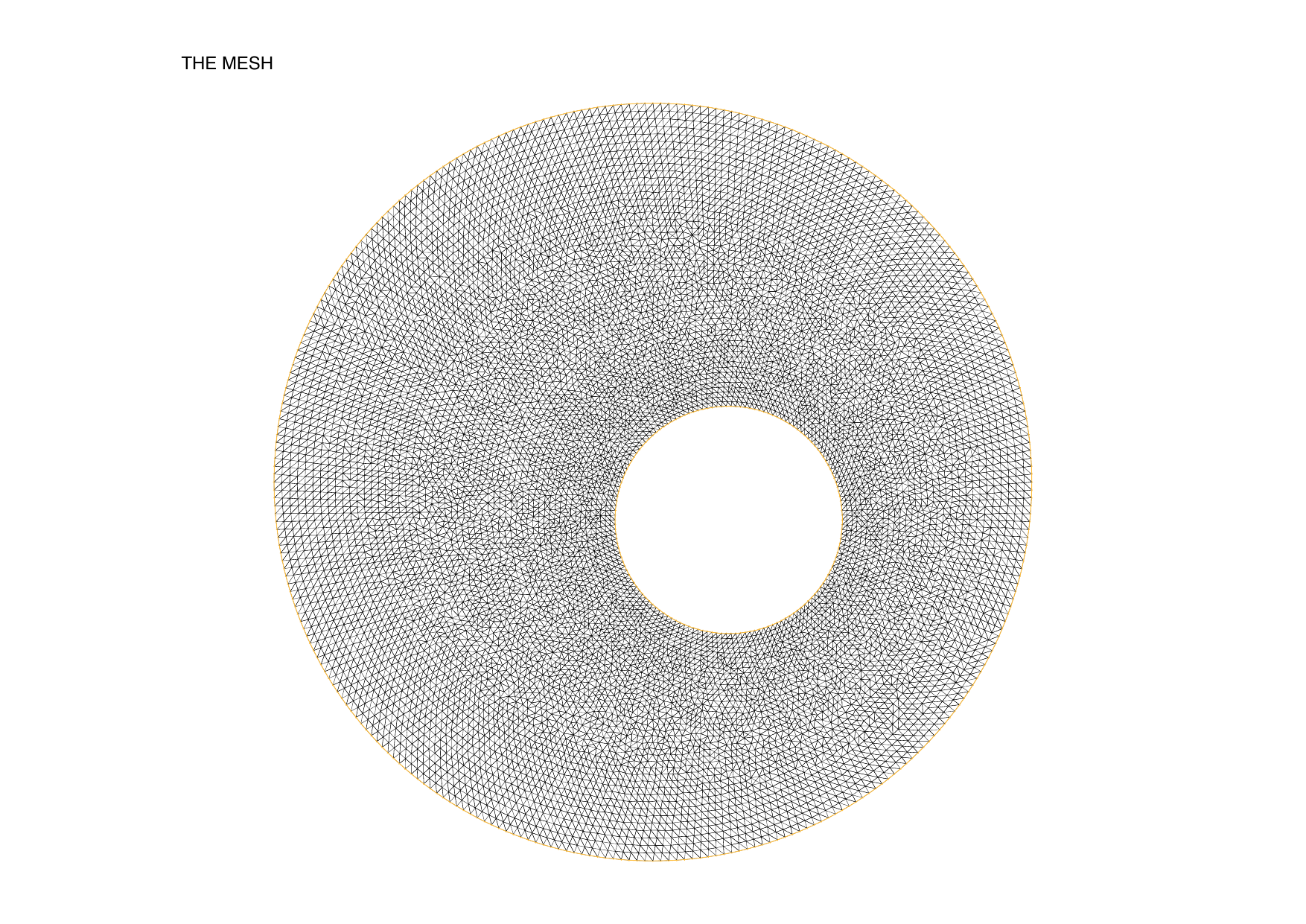}
\vskip-0,50cm
\caption{First numerical simulation: the mesh.}
\label{Fig:mesh}
\end{figure}

\begin{figure}[h!]\centering
\includegraphics[width=0.5\textwidth]{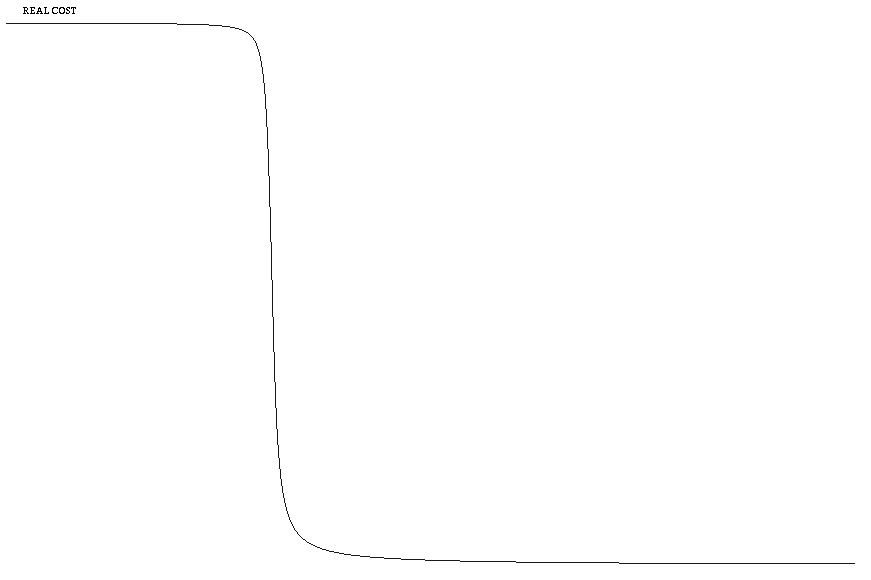}
\caption{First numerical simulation: cost evolution.}
\label{Fig:cost}
\end{figure}

\begin{figure}[h!]\centering
\includegraphics[width=.7\textwidth]{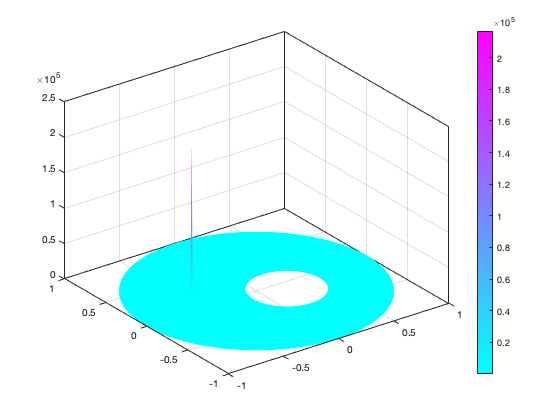}
\vskip-.5cm
\caption{First numerical simulation: the optimal right-hand side $f_{opt}=m\delta_{x_0}$.}
\label{Fig:F1}
\end{figure}
\end{Exa}

\begin{Exa}
We solve numerically the problem (\ref{pbme3}) for $\Om$ the unit ball of $\RR^2$ and the linear cost given by $j(x,s,z)=g(x)\,s$ with $g(x,y)=x^2-y^2$. We take $m=1.25$ corresponding to use, approximately, a maximum of 40\% of $\beta$. We can observe the computed optimal right-hand side $f_{opt}$ in Figure \ref{Fig:F2}.

\begin{figure}[h!]\centering
\includegraphics[width=.7\textwidth]{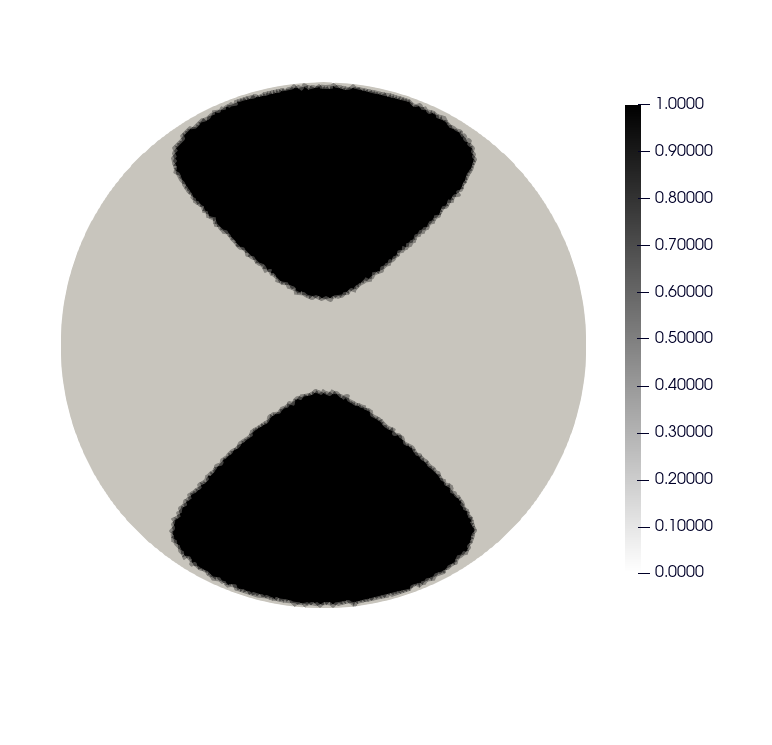}
\vskip-1.75cm
\caption{Second numerical simulation: the optimal right-hand side $f_{opt}$.}
\label{Fig:F2}
\end{figure}
\end{Exa}

\begin{Exa}
In this last example we solve numerically also, the problem (\ref{pbme3}) for $\Om$  the unit ball of $\RR^2$ and $m=1.25$ as in the previous case, but we consider $j(x,s,z)=|s-u_0|^2\ $ taking a constant function $u_0=0.1$. As we can expect $f_{opt}$ is a bang-bang control, see Figure \ref{Fig:F3}.

\begin{figure}[h!]\centering
\includegraphics[width=.8\textwidth]{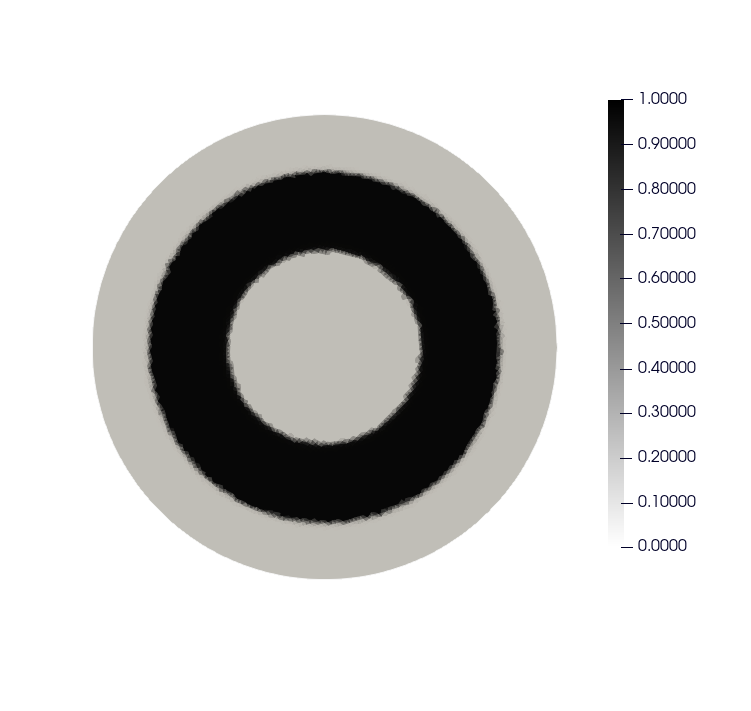}
\vskip-1.75cm
\caption{Third numerical simulation: $f_{opt}$ is bang-bang.}
\label{Fig:F3}
\end{figure}
\end{Exa}

\bigskip
\noindent{\bf Acknowledgments. }The work of GB is part of the project 2017TEXA3H {\it``Gradient flows, Optimal Transport and Metric Measure Structures''} funded by the Italian Ministry of Research and University. GB is member of the Gruppo Nazionale per l'Analisi Matematica, la Probabilit\`a e le loro Applicazioni (GNAMPA) of the Istituto Nazionale di Alta Matematica (INdAM).\par 
The work of JCD and FM is a part of the FEDER project PID2023-149186NB-I00 of the {\it Ministerio de Ciencia, Innovaci\'on y Universidades} of the government of Spain.
\bigskip

\end{document}